\numberwithin{equation}{section}
\newtheorem{thm}{Theorem}[section]
\newtheorem{lemma}[thm]{Lemma}
\newtheorem{prop}[thm]{Proposition}
\newtheorem{cor}[thm]{Corollary}
{\theorembodyfont{\rmfamily}
\newtheorem{defn}[thm]{Definition}

\newtheorem{rmk}[thm]{Remark}
}
\newcommand{\qed}{\hfill \mbox{\raggedright \rule{.07in}{.1in}}}
\newenvironment{proof}{\vspace{1ex}\noindent{\bf
Proof}\hspace{0.5em}}{\hfill\qed\vspace{1ex}}
\newenvironment{pfof}[1]{\vspace{1ex}\noindent{\bf Proof of
#1}\hspace{0.5em}}{\hfill\qed\vspace{1ex}}
\newcommand{\R}{{\mathbb R}}
\newcommand{\C}{{\mathbb C}}
\newcommand{\Z}{{\mathbb Z}}
\newcommand{\var}{\operatorname{var}}
\newcommand{\BV}{\operatorname{BV}}
\newcommand{\SMALL}{\textstyle}
\newcommand{\cC}{{\mathcal C}}
\title{Statistical Properties and Decay of Correlations for Interval Maps with Critical Points and 
Singularities.}
\author{
Stefano Luzzatto
\thanks{Mathematics Section,
Abdus Salam International Centre for Theoretical Physics,
Strada Costiera 11, 34151 Trieste, Italy. \texttt{luzzatto@ictp.it}}
\and 
Ian Melbourne 
\thanks{Department of Mathematics, University of Surrey,
Guildford, Surrey GU2 7XH, UK. Current Address: Mathematics Institute, University of Warwick, Coventry CV4 7AL, UK. \texttt{i.melbourne@warwick.ac.uk}}
}
\date{10 October 2011; revised 11 October 2012}
\begin{document}

\maketitle

 \begin{abstract}
We consider a class of piecewise smooth one-dimensional maps with critical
points and singularities (possibly with infinite derivative).
Under mild summability conditions on the growth of the derivative on critical orbits, we prove the central limit theorem and a vector-valued
almost sure invariance principle.  We also obtain results on decay of correlations and large deviations.
  \end{abstract}

\section{Introduction and statement of results}

It is well known that deterministic dynamical systems can exhibit ``chaotic'' and ``random-like'' behaviour which in many cases gives rise to  statistical properties analogous to those which are classical in the probabilistic setting of sequences of random variables. 
The purpose of this paper is to establish some statistical properties  for a 
large class of deterministic interval maps (studied by
Ara\'ujo, Luzzatto and Viana~\cite{ALV09})
which are allowed to have critical points of arbitrary order, discontinuities and even unbounded derivative.

The method in~\cite{ALV09} yields the existence of
an absolutely continuous invariant probability measure 
via the standard approach of {\em inducing}. However
the induced map does not
have many desirable properties such as uniformly bounded distortion, long
branches, or a Markov partition.    This accounts for the generality of the method, but has the
drawback (at least temporarily) that standard statistical properties such as the central limit
theorem (CLT)  are not an immediate consequence.  
In contrast, D\'iaz-Ordaz
{\em et al.}~\cite{DiazHollandLuzzatto06} establish the existence of a
Gibbs-Markov induced map (Markov map with uniformly bounded distortion and long
branches) for a more restricted class of maps.
The CLT is then guaranteed provided that
the associated return time function lies in $L^2$ (in~\cite{DiazHollandLuzzatto06} the return time function has exponential tails and hence lies in $L^p$ for all $p<\infty$).
The vector-valued almost sure invariance
principle (ASIP) is also immediate in their setting by~\cite{Gouezel10,MN09}.

The aim of this paper is to establish statistical properties 
in the more general setting of~\cite{ALV09}.    Under very mild 
strengthenings of the hypotheses in~\cite{ALV09} (primarily to guarantee sufficient
integrability for the return time function), we prove the CLT and vector-valued
ASIP.   In addition, we obtain results on decay of correlations and large deviations.

In  \S \ref{subsec:defs} we give  the definition of the class of maps under consideration.  
In \S\S \ref{subsec:limit} and~\ref{subsec:decay} we 
state precisely our main results on statistical limit laws
and decay of correlations respectively.
In \S \ref{subsec:strategy} we give an overview of the strategy for the proofs.

\subsection{Interval maps with critical points and singularities}
\label{subsec:defs}

Let $I=[0,1]$ and suppose $f:I\to I$ is
a piecewise smooth map in the sense that $I$ is a union of finitely many intervals such that $f$ is $C^2$ and monotone on the interior of each interval and extends continuously to the boundary of each interval.
\begin{figure}[ht]
\center{
\includegraphics{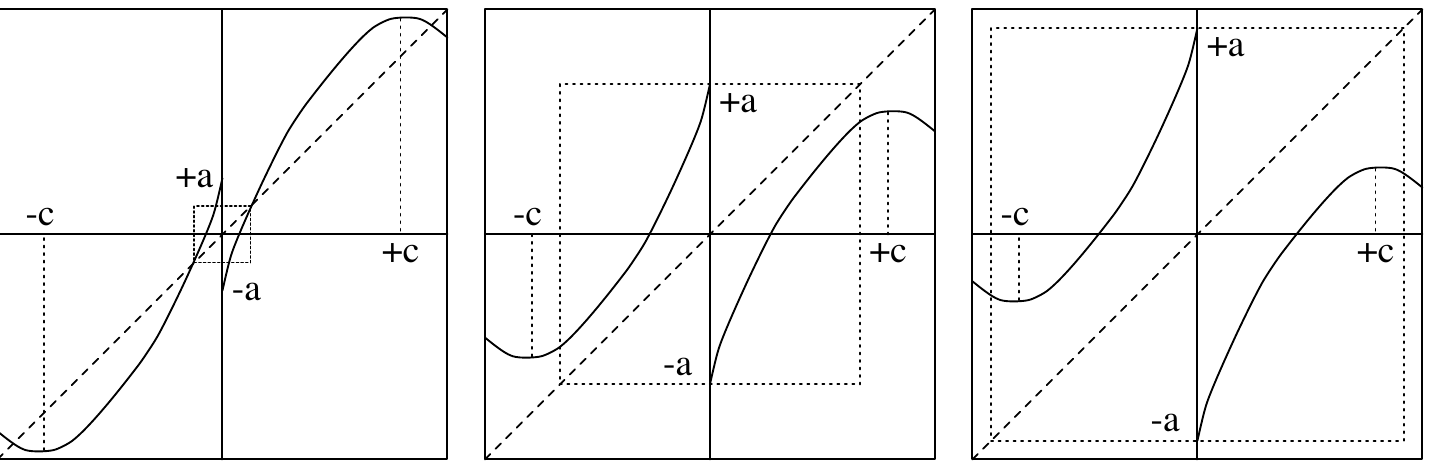}
}
\caption{Interval maps with critical points and singularities}
\end{figure}
To define our specific assumptions on $f$, we introduce some notation. 
Let  \( \mathcal C\)  denote the set of all ``one-sided'' critical/singular points 
 $c^+$ and $c^{-}$ ($\mathcal{C}$ is assumed to be finite)
 and define corresponding one-sided neighbourhoods
\[
\Delta(c^{+},\delta) = (c^{+},c^{+}+\delta) \quad\text{and}\quad
\Delta(c^{-},\delta) = (c^{-}-\delta,c^{-}),
\]
for each $\delta>0$. For simplicity, from now on we use $c$ to
represent 
the generic element of \( \mathcal C\) and write $\Delta$ for
$\cup_{c\in\cC} \Delta(c,\delta)$.  

We can now state our assumptions. 

\begin{description}
\item[(A1) Nondegenerate critical/singular set.]
\emph{Each 
\( c\in\mathcal C\) has a well-defined (one-sided) critical order
$\ell = \ell(c) > 0$  in the sense that }
\begin{equation}\label{eq:der0}
 |f(x)-f(c)|
\approx  d(x,c)^{\ell}, \quad
 |f'(x)|\approx  d(x,c)^{\ell-1},  \quad 
 |f''(x)|\approx  d(x,c)^{\ell-2}
\end{equation}
\emph{for all $x$ in some $\Delta(c,\delta)$.  We suppose \( \ell(c)\neq 1 \) for all \( c\in\mathcal C \).}
\end{description}

Note that we say that \(
f\approx g \) if  the ratio \( f/g \) is bounded above and below
uniformly in the stated domain. 
The case $\ell(c)<1$ corresponds to \emph{singular} points and the case
$\ell(c)>1$ to \emph{critical} points.
The case \( \ell(c) = 1 \) is excluded since
this would be a degenerate case which is not hard to deal with
but would require having to introduce special notation and special
arguments,  whereas the other cases can all be dealt with in
a unified formalism.

\begin{description}
\item[(A2) Uniform expansion 
away from the critical/singular set.]
\emph{There exists a constant \( \kappa>0 \), independent of \(
\delta \), such that for every point \( x \) and every integer \(
n\geq 1 \) such that $d(f^jx,\mathcal{C})>\delta$ for all $0\le j \le n-1$ and $d(f^nx,\mathcal{C}) \leq \delta$ we have }
\begin{equation*}
    |(f^n)'(x)| \geq \kappa.
\end{equation*}
\emph{Further,  for every $\delta>0$ there exist constants $c(\delta)>0$
and $\lambda(\delta)>0$ such that }
\begin{equation*}
|(f^n)'(x)| \ge c(\delta) e^{\lambda(\delta) n}
\end{equation*}
\emph{for every $x$ and $n\ge 1$ such that $d(f^jx,\mathcal{C})>\delta$ for all $0\le j \le n-1$.}
\end{description}

These conditions are quite natural and are generally satisfied for smooth maps. 
For example, the first  one is satisfied 
if \( f \) is \( C^3 \), has negative Schwarzian derivative and 
satisfies the property that the the derivative along all critical
orbits tends to infinity~\cite[Theorem~1.3]{BruStr03}. 
The second  is satisfied if
$f$ is  $C^2$ and all periodic points are
repelling~\cite{Man85}. 

The next two conditions generalise condition {\bf (A3) Summability condition
along critical orbits} from~\cite{ALV09}.
Write 
\[
D_n(c):=|(f^n)'(fc)|, \qquad 
E_n(c):=D_{n-1}(c)^{1/(2\ell(c)-1)}, \qquad
d_n(c):=d(f^nc,\mathcal{C}).
\]
These quantities are abbreviated to $D_n$, $E_n$, $d_n$ in future.
Fix $p\ge0$.
\emph{For every critical point \( c \) with \( \ell = \ell(c) > 1 \) 
we have }
\begin{description}
\item[(A3\( _p \))]
$\sum_{n=1}^\infty n^pd_n^{-1}\log d_n^{-1}\,E_n^{-1} < \infty$.
\item[(A4\( _p \))]
$\sum_{n=1}^\infty n^pE_n^{-1} < \infty$.

\end{description}
We do not impose conditions on the orbit of the singular points with \( \ell(c) < 1 \).

Notice that (A3\(_p \)) implies (A4$_{p'}$) if \( p\geq p' \) but not if \( p< p' \). The two conditions play different roles in the arguments and we will sometimes assume they hold for distinct values \( p \) and \( p' \).   
Condition (A3\(_p \))  plays off the derivative against the recurrence in
such a way as to  optimize to some extent the class of maps to which
it applies. As mentioned in \cite{ALV09}  we cannot
expect to obtain certain dynamical properties in this setting
using a condition which only takes into account the growth of the
derivative. 
Note that the conditions are satisfied for any \( p\geq 0 \) 
 if the derivative is growing exponentially fast  and the recurrence
is not faster than 
exponential in the sense that there is a constant $C>0$  such that
\(
D_n\ge C e^{\lambda n}\) and  \(  d_n
\ge C e^{-\alpha n}\) 
 with \(  \alpha < {\lambda}/{(2\ell-1)}.
 \)

\begin{thm}[Ara\'ujo {\em et al.}~\cite{ALV09}] \label{thm-ALV}
Suppose that \( f: I \to I  \) satisfies assumptions  (A1), (A2), (A3$_0$) 
and (A4\( _1 \)). Then it admits an ergodic, absolutely continuous, invariant probability measure \( \mu \).
\end{thm}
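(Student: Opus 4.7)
The plan is to construct the invariant measure by \emph{inducing}: find a reference interval $Y \subset I$ disjoint from a neighbourhood $\Delta$ of the critical/singular set, define a (first-return or nonuniform) return time function $\tau:Y\to\mathbb{N}$ so that $F=f^\tau$ has good expansion and distortion properties, build an absolutely continuous invariant probability measure $\nu$ for $F$, and finally lift it to an $f$-invariant absolutely continuous probability measure via Kac's formula
\[
\mu \;=\; \frac{1}{\int_Y \tau\,d\nu}\sum_{k=0}^{\infty} f_*^k\bigl(\nu|_{\{\tau>k\}}\bigr),
\]
which requires $\tau\in L^1(\nu)$ (hence, via bounded distortion, $\tau\in L^1(\mathrm{Leb})$).

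The construction of $F$ proceeds by a hyperbolic-times/binding-period argument. Away from $\Delta$, condition (A2) gives exponential expansion: any orbit segment remaining $\delta$-far from $\mathcal{C}$ satisfies $|(f^n)'(x)|\ge c(\delta)e^{\lambda(\delta) n}$. When an orbit lands in $\Delta(c,\delta)$, the derivative estimate in (A1) causes a loss of order $d(x,c)^{\ell-1}$; this loss is then recovered during a \emph{binding period}, during which one compares the orbit of $x$ with the critical orbit $\{f^n c\}$. The length of the binding period and the expansion recovered during it are controlled precisely by the sequences $D_n(c)$ and $d_n(c)$, and the combination with the uniform exponential expansion from (A2) gives net expansion provided the critical orbit does not return to $\mathcal{C}$ too quickly. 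One then defines $\tau(x)$ as the first time at which enough expansion has accumulated and $x$ lies in a good return domain inside $Y$; standard arguments on the cylinder structure of $\{\tau=n\}$ give the desired distortion control for $F$, whence the existence of $\nu$ via the Lasota--Yorke/transfer operator framework for uniformly expanding maps with countable Markov structure.

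The main obstacle is the tail estimate $\operatorname{Leb}\{\tau>n\}$, and this is exactly where (A3$_0$) and (A4$_1$) enter. One splits the event $\{\tau>n\}$ according to the sequence of depths of returns to $\Delta$ up to time $n$. The Lebesgue measure of the set of points making a deep return into $\Delta(c,\rho)$ at a given time is of order $\rho$ divided by an accumulated derivative; summing over possible itineraries and using the shape of $f$ near $c$, one reduces the bound to sums of the form $\sum_n d_n^{-1}\log d_n^{-1}E_n^{-1}$ (for convergence of the binding estimates, handled by (A3$_0$)) and $\sum_n E_n^{-1}$ (for the raw tail, whose first-moment strengthening $\sum_n nE_n^{-1}<\infty$ from (A4$_1$) yields $\tau\in L^1$). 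This is the technically delicate step, since orbits must be tracked through possibly many nested excursions into critical neighbourhoods while retaining enough expansion to dominate the distortion and measure estimates.

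With $\nu$ constructed and $\int_Y \tau\,d\nu<\infty$, the Kac-type sum above produces a finite $f$-invariant measure absolutely continuous with respect to Lebesgue, which is normalised to $\mu$. Ergodicity of $\mu$ is inherited from ergodicity of $F$ (itself a standard consequence of the Markov/expanding structure of $F$ and distortion control), together with the observation that (A2) implies Lebesgue-almost every $x\in I$ eventually enters $Y$, so that $f$-invariant measurable sets pull back to $F$-invariant subsets of $Y$.
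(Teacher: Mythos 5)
Your overall skeleton (induce, control expansion via binding periods, estimate the tail of $\tau$, lift by a Kac-type sum) matches the strategy of Ara\'ujo {\em et al.}, but there is a genuine wrong turn at the crucial step: you assert that the induced map has ``the desired distortion control'' and a ``countable Markov structure'', and you derive the existence of $\nu$ from the transfer-operator framework for expanding maps with such structure. Under the hypotheses (A1), (A2), (A3$_0$), (A4$_1$) this is exactly what one \emph{cannot} arrange: the induced map constructed in \cite{ALV09} has neither uniformly bounded distortion, nor long branches, nor a Markov partition (obtaining a Gibbs--Markov induced map is possible only for the more restricted class of \cite{DiazHollandLuzzatto06}). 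The actual mechanism is different: $F$ is defined on all of $I$ (with $\tau=q_0$ for points that stay out of $\Delta$ for $q_0$ iterates, and $\tau=\ell_0+b$ otherwise, where $b$ is the binding period), and one shows it is a \emph{Rychlik map}, i.e.\ piecewise monotone with
\[
\sum_{a\in\alpha}{\textstyle\sup_a}(1/|F'|)<\infty
\quad\text{and}\quad
\sum_{a\in\alpha}{\textstyle\var_a}(1/|F'|)<\infty .
\]
These two summability conditions are precisely what (A4$_0$) (implied by (A4$_1$)) and (A3$_0$) deliver, via the binding-period estimates $\sup_a|1/F'|\le CE_b^{-1}$ and $\var_a|1/F'|\le C(1+\log d_{b-1}^{-1})d_{b-1}^{-1}E_{b-1}^{-1}$ for $a\in\alpha(b)$. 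Existence of the absolutely continuous $F$-invariant measure $\hat\mu$ with bounded $\BV$ density then comes from Rychlik's bounded-variation theorem, not from a distortion/Markov argument; so your attribution of (A3$_0$) to ``convergence of the binding estimates'' misses that its real role is to control the \emph{variation} of $1/F'$ over branches.

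If you insist on your route, the proof breaks at the point where you need bounded distortion for $F$ on each branch $\{\tau=n\}$: the branches produced by the binding construction can be very short and their images need not cover a fixed reference interval, so the usual Koebe/bounded-distortion argument is unavailable, and the Lasota--Yorke inequality you would need must instead be obtained in the $\BV$ framework from the two sums above. The remaining steps of your sketch are fine: (A4$_1$) does give $\tau\in L^1(m)$ (hence $\tau\in L^1(\hat\mu)$ since the density is bounded), the Kac-type sum produces the $f$-invariant absolutely continuous probability measure, and ergodicity is obtained from Rychlik's spectral decomposition of $F$ (finitely many ergodic a.c.i.p.'s) rather than from a Markov structure.
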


The proof of Theorem~\ref{thm-ALV} relies on the construction of a suitable
partition \( \alpha \) of the interval \( I \) (mod 0)  into a finite or  countable collection of subintervals and a return time function \( \tau: I\to \mathbb N \) which is constant on elements of the partition \( \alpha \). 
Define the \emph{induced map} \( F: I \to I \) 
given by \( F|_{\alpha}=f^{\tau(\alpha)} \). 
Ara\'ujo {\em et al.} construct the induced map so that (i) $F:I\to I$ is 
a Rychlik map~\cite{Rychlik83} and (ii) $\tau$ is Lebesgue integrable.
It follows from Rychlik~\cite{Rychlik83} that there is an absolutely
continuous invariant probability measure for the induced map $F$. 
Then Theorem~\ref{thm-ALV} follows by standard arguments
from the integrability of the return time $\tau$.

We note that (A3\( _{0} \)) and (A4$_0$) are used to show that $F$ is Rychlik, and (A4$_1$) implies that $\tau$ is integrable.
Moreover, (A4\( _p \)) implies
that $\tau\in L^p$ (see Corollary~\ref{cor-ALV}(a) below).
It is natural to explore statistical limit laws such as the central limit theorem when $\tau\in L^2$.

\subsection{Statement of results: Statistical limit laws}
\label{subsec:limit}

The first main result of this paper is a vector-valued almost sure invariance principle. 
For $d\ge1$, we let
$\phi:I\to\R^d$ be an observable lying in the space $\BV$ of functions of bounded
variation.     Suppose that $\int_I\phi\,d\mu=0$, and define
$\phi_n=\sum_{i=0}^{n-1}\phi\circ f^i$.

\begin{defn}
The sequence \( \{\phi_n\} \) 
satisfies the \emph{$d$-dimensional almost sure invariance principle (ASIP)} if there exists a \( \lambda<\frac12 \) and an abstract probability space supporting a sequence of random variables \( \{\phi^{*}_n\} \) and a \( d \)-dimensional Brownian motion \( W(t) \) such that: 
\begin{enumerate}
\item \( \{\phi_n \}_{n\geq 1} =_d \{\phi^{*}_n\}_{n\geq 1}\);
\item \( \phi^{*}_n = W(n) + \mathcal O(n^{\lambda})\)  as \( n\to \infty \) a.e.
\end{enumerate}
(The ASIP is \emph{nondegenerate} if the Brownian motion \( W(t) \) has a nonsingular covariance matrix.)
\end{defn}

\begin{thm}\label{thm-ASIP}
Suppose that \( f: I \to I  \) satisfies assumptions  (A1), (A2), (A3$_\epsilon$) and (A4\( _p \)) for some $\epsilon>0$, \( p> 2 \). 
Then the vector-valued ASIP holds for mean zero $\BV$ observables $\phi:I\to\R^d$
(for any $\lambda>p/(4p-4)$).
\end{thm}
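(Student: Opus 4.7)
The plan is to reduce to the induced Rychlik map $F=f^\tau$ constructed in~\cite{ALV09} and then to invoke the abstract vector-valued ASIP of Gou\"ezel~\cite{Gouezel10} (or equivalently Melbourne--Nicol~\cite{MN09}) for a system built from an exponentially mixing base together with an $L^p$ return time. First I would re-run the ALV construction under the strengthened hypotheses (A3$_\epsilon$) and (A4$_p$), $p>2$. The existence and Rychlik property of $F$ already follow from (A3$_0$) and (A4$_0)$; the role of (A4$_p$) is to obtain $\tau\in L^p(\mu)$ (Corollary~\ref{cor-ALV}(a) in the paper), while (A3$_\epsilon$) enters to upgrade the ALV distortion/summability constants so that the Rychlik transfer operator has the quantitative regularity needed in what follows.

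Next I would use that every Rychlik map has a spectral gap for its Perron--Frobenius operator on $\BV$, hence $(F,\mu_F)$ is exponentially mixing. Given a mean-zero $\BV$ observable $\phi:I\to\R^d$, form the induced observable $\Phi(x)=\sum_{i=0}^{\tau(x)-1}\phi(f^ix)$; since $\BV\subset L^\infty$, we have $|\Phi|\le\tau\|\phi\|_\infty$ and so $\Phi\in L^p(\mu_F)$. The abstract theorem from~\cite{Gouezel10,MN09} then yields an ASIP for $\Phi_N=\sum_{k=0}^{N-1}\Phi\circ F^k$ with remainder $O(N^\lambda)$. Passing to $\phi_n=\sum_{i=0}^{n-1}\phi\circ f^i$ is standard: for $n$ we choose the integer $N=N(n)$ with $\sum_{k<N}\tau\circ F^k\le n<\sum_{k\le N}\tau\circ F^k$, use Kac's formula together with the $L^p$ bound on $\tau$ to control the ``incomplete'' last block, and balance the exponents to reach the claimed rate $\lambda>p/(4p-4)$.

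The main obstacle is verifying that $\Phi$ actually lies in the function-space class required by the abstract ASIP machinery. Because the ALV partition $\alpha$ is only countable and lacks a Markov structure, and because $\tau$ is unbounded, $\Phi$ need not be globally of bounded variation on $I$; it is only BV on each cylinder $\alpha_i$ with $\var_{\alpha_i}\Phi\lesssim \tau(\alpha_i)\|\phi\|_{\BV}$. The point is then to exploit the summability $\sum_i\tau(\alpha_i)^p\,\mu_F(\alpha_i)<\infty$ supplied by (A4$_p$) (and the distortion control from (A3$_\epsilon$)) to show that $\Phi$ fits into a weighted/piecewise-$\BV$ space on which $F$ still has a spectral gap and on which Gou\"ezel's martingale-coboundary approximation operates. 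Once this ``piecewise $\BV$ with controlled total variation'' regularity is established, the conclusion and the rate $\lambda>p/(4p-4)$ follow directly from the cited abstract results.
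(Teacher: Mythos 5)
Your proposal is correct and follows essentially the same route as the paper: induce to the Rychlik map, observe that the induced observable $\Phi$ lies only in a weighted piecewise-$\BV$ class with $\var_a\Phi\lesssim\tau(a)\|\phi\|_{\BV}$, use the summability supplied by (A3$_\epsilon$) and (A4$_p$) to verify the hypotheses of Gou\"ezel's spectral ASIP for the induced system, and lift back using the $L^p$ return time. One caveat: Gou\"ezel~\cite{Gouezel10} and Melbourne--Nicol~\cite{MN09} are not interchangeable here (and Gou\"ezel's method is spectral, not a martingale--coboundary approximation) --- the route via~\cite{MN09} would require the extra hypothesis $h^{-1}\in L^1(m)$, which Theorem~\ref{thm-ASIP} does not assume, and it is precisely Gou\"ezel's result that sidesteps this.
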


The ASIP implies a wide range of statistical properties such as the central
limit theorem and law of the iterated logarithm and their functional versions
(see for example~\cite{MN09}).    Many of these results require an $L^p$
condition, $p>2$, but it is natural to ask whether the central limit
theorem (CLT) and its functional version (FCLT) hold if the return time lies
in $L^2$.   We prove such a result 
under an additional assumption on the density of the invariant measure $\hat\mu$ for the induced Rychlik map $F$.
Let $m$ denote Lebesgue measure and write $d\hat\mu=h\,dm$.
(Since the vector-valued (F)CLT is no
more difficult than the scalar case, we suppose for simplicity that $d=1$.)

\begin{thm} \label{thm-CLT}
Suppose that \( f: I \to I  \) satisfies assumptions  (A1), (A2), (A3$_1$)
and (A4\( _2 \)).
Suppose moreover that $h^{-1}\in L^1(m)$.
Then the CLT holds for mean zero $\BV$ observables $\phi:I\to\R$.
That is, there exists $\sigma^2\ge0$ such that
\[
\lim_{n=\infty}\mu(n^{-\frac12}\phi_n<c)=P(G<c), \enspace\text{for all $c\in\R$},
\]
where $G$ is a normal random variable with mean zero and variance $\sigma^2$.

Moreover, the FCLT holds: define $W_n(t)=n^{-\frac12}\phi_{nt}$ for $t=j/n$,
$j=0,1,2,\dots,n$ and linearly interpolate to form a random element $W_n\in
C([0,1])$ (the space of continuous functions on $[0,1]$ with the sup-norm).   
Let $W(t)$ be a Brownian motion
with mean zero and variance parameter $\sigma^2$.   Then $W_n$ converges weakly to $W$ in $C([0,1])$.
\end{thm}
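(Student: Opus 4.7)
The plan is to transfer the (F)CLT from the induced Rychlik system $(F,\hat\mu)$ to the base $(f,\mu)$ via the standard lifting argument, applied to the induced observable $\Phi=\sum_{j=0}^{\tau-1}\phi\circ f^j$, which has $\hat\mu(\Phi)=\bar\tau\,\mu(\phi)=0$ where $\bar\tau=\int\tau\,d\hat\mu$. By Corollary~\ref{cor-ALV}(a), condition (A4$_2$) gives $\tau\in L^2(m)$; since $F$ is Rychlik, the density $h$ lies in $\BV$ and is in particular bounded on $[0,1]$, whence $\tau\in L^2(\hat\mu)$ and $|\Phi|\le\|\phi\|_\infty\tau\in L^2(\hat\mu)$.

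For the (F)CLT of $\Phi$ on $(F,\hat\mu)$ I would use Gordin's martingale--coboundary method. The Perron--Frobenius operator $L$ of $F$ with respect to $m$ has a spectral gap on $\BV$ by Rychlik's theorem, and the $\hat\mu$-transfer operator is $\hat Pg=h^{-1}L(hg)$. For $\Psi\in\BV$ with $\hat\mu(\Psi)=0$ one has $\|L^n(h\Psi)\|_\infty=O(\theta^n)$ for some $\theta<1$, giving the pointwise bound $|\hat P^n\Psi|\le C\theta^n h^{-1}$. The hypothesis $h^{-1}\in L^1(m)$ then yields
\[
\|\hat P^n\Psi\|_{L^2(\hat\mu)}^2=\int(\hat P^n\Psi)^2 h\,dm\le C^2\theta^{2n}\|h^{-1}\|_{L^1(m)},
\]
which is summable in $n$, and Gordin's decomposition delivers the (F)CLT for all BV observables. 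To pass from BV to the merely $L^2(\hat\mu)$ observable $\Phi$ (whose BV norm can be infinite because $\tau$ is unbounded on the Rychlik partition), I would approximate by the truncations $\Phi_N=\sum_{j=0}^{(\tau\wedge N)-1}\phi\circ f^j$, each of which is BV since $\{\tau\le N\}$ is a finite union of partition elements. Dominated convergence gives $\Phi_N\to\Phi$ in $L^2(\hat\mu)$, and a standard $L^2$-approximation argument for CLTs and FCLTs (convergence of asymptotic variances plus uniform tightness in $C([0,1])$) transfers the limit law to $\Phi$.

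To pass from induced to base, for $N\approx n/\bar\tau$ one writes $\phi_n=\sum_{k=0}^{N-1}\Phi\circ F^k+R_{n,N}$; the error $R_{n,N}$ is $o_p(\sqrt n)$ thanks to $\tau\in L^2(\hat\mu)$ and the Birkhoff ergodic theorem for $\tau$. Rescaling by $n^{-1/2}$ converts the (F)CLT for $\Phi$ with variance $\sigma_F^2$ into the (F)CLT for $\phi$ on $(f,\mu)$ with variance $\sigma^2=\sigma_F^2/\bar\tau$. I expect the main obstacle to be the second step: the absence of BV regularity for $\Phi$ means Gordin cannot be applied directly, and the approximation by truncations must be controlled uniformly in $N$. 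This is precisely where the hypothesis $h^{-1}\in L^1(m)$ enters, converting the BV spectral gap of $L$ into summable $L^2(\hat\mu)$-decay of $\hat P^n$, which is what the Gordin construction needs.
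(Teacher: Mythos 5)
Your overall architecture (induce, prove the (F)CLT for $\Phi$ on the Rychlik system via Gordin, lift back using $\tau\in L^2$) is the same as the paper's, and your treatment of genuinely $\BV$ observables on the induced system --- the pointwise bound $|\hat P^n\Psi|\le C\theta^n h^{-1}$ and the use of $h^{-1}\in L^1(m)$ to convert it into summable $L^2(\hat\mu)$ decay --- is exactly the mechanism of Corollary~\ref{cor-tau} restricted to the $\BV$ case. The gap is in the step you yourself flag as the main obstacle: passing from $\BV$ observables to $\Phi$. An $L^2(\hat\mu)$ approximation $\Phi_N\to\Phi$ does not transfer the CLT unless you control $n^{-1}\var\bigl(\sum_{k<n}(\Phi-\Phi_N)\circ F^k\bigr)$ uniformly in $n$ and make it small as $N\to\infty$; for a dependent sequence this requires summable correlation decay for the tails $\Phi-\Phi_N$ with constants tending to zero, i.e.\ control of $\sum_n\|\hat P^n(\Phi-\Phi_N)\|_{L^2(\hat\mu)}$. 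Applying Gordin to each $\Phi_N$ gives constants proportional to $\|\Phi_N\|_{\BV}$, which grows without bound in $N$ (on a partition element $a$ with $\tau(a)>N$ the truncated sum already has variation of order $N\|\phi\|_{\BV}$), so nothing uniform comes out. The hypothesis $h^{-1}\in L^1(m)$ does not help here --- it only converts sup-norm decay into $L^2(\hat\mu)$ decay --- and indeed your argument never uses (A3$_1$), which should make you suspicious.

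The paper closes exactly this gap with the weighted space $\BV_\tau$: the induced observable satisfies $\|\Phi\|_{\BV_\tau}\le C\|\phi\|_{\BV}$, and Lemma~\ref{lem-tau} shows that $L:\BV_\tau\to\BV$ is bounded, the variation part of that estimate being precisely where $\sum_a\var_a(1/F')\,\tau(a)<\infty$ (condition (F2$_1$), supplied by (A3$_1$)) is consumed. Then $P^n\Phi=h^{-1}L^{n-1}\bigl(L(h\Phi)\bigr)$ with $L(h\Phi)\in\BV$ of mean zero, so the spectral gap applies from $n=1$ onward and $\chi=\sum_n P^n\Phi$ converges in $L^2(\hat\mu)$ (Corollary~\ref{cor-tau}); Gordin then applies directly to $\Phi$ with no truncation. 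If you wanted to salvage the truncation route you would have to prove a tail version of Lemma~\ref{lem-tau} (restricting the sums to $\{a:\tau(a)>N\}$), which amounts to the same estimate. One further small omission: the lifting step (Proposition~\ref{prop-lift}(b)) requires the (F)CLT on the induced system for $\tau-\int\tau\,d\hat\mu$ as well as for $\Phi$; this follows from the same argument since $\tau\in\BV_\tau\cap L^2$, but it needs to be stated.
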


It is immediate from Rychlik~\cite{Rychlik83} that the density $h$ lies in
$L^\infty$ and even in $\BV$.   Unfortunately,
it is not known how to specify a condition directly on the map \( f \) to ensure that 
 $h^{-1}\in L^1(m)$ in the general setting of this paper, though it may be possible to verify this assumption in specific situations.   In the Markov context, there are classical assumptions on the branches of the induced map such as ``finite range structure'' which are used to ensure this.  However, the flexibility of the approach of Ara\'ujo {\em et al.}~\cite{ALV09} and in the current paper 
is achieved by avoiding such assumptions.
Remarkably, the only result that suffers as a consequence is
Theorem~\ref{thm-CLT}.

\subsection{Statement of results: Decay of correlations}
\label{subsec:decay}

Our second main set of results concerns decay of correlations.  As a consequence
we prove a result about large deviations.

In general $f:I\to I$
need not even be mixing.  However, there is a 
spectral decomposition into basic sets that are ergodic and
moreover mixing up to a finite cycle.     We restrict to such a basic set
(which we relabel as $I$) and assume that it is mixing.   
Given observables $v,w:I\to\R$, we define the correlation function
$\rho_{v,w}(n)=\int_I v\,w\circ f^n\,d\mu-\int_I v\,d\mu\int_I w\,d\mu$.

First we state a result about exponential decay of correlations.

\begin{thm} \label{thm-expdecay}
Suppose that \( f: I \to I  \) is mixing and satisfies assumptions  (A1), 
(A2), and that there exist constants $c_0,C_0\ge1$ such that 
$d_nE_n\ge C_0e^{c_0n}$ for all $n\ge1$.     
Then there exists $c>0$, $C>0$ such that
\[
|\rho_{v,w}(n)|\le C\|v\| |w|_\infty e^{-cn},
\]
for all $v\in\BV$, $w\in L^\infty$, $n\ge1$.
\end{thm}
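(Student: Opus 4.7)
The plan is to reduce Theorem~\ref{thm-expdecay} to two ingredients: (i)~exponential tails for the return time $\tau$ of the induced Rychlik map $F$ of~\cite{ALV09}, and (ii)~a standard operator renewal / lifting argument that converts exponential decay of correlations for $F$ together with exponential tails of $\tau$ into exponential decay of correlations for $f$. Since the mixing hypothesis is built into the theorem, and the Rychlik map $F$ already has exponential decay of correlations on $\BV$ by the spectral gap for $L_F$ (by Rychlik~\cite{Rychlik83}), it suffices to secure these two ingredients.

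First I would examine the construction of $\tau$ in~\cite{ALV09}. There the set $\{\tau>n\}$ is controlled in Lebesgue measure by orbits that either return too often to $\Delta$ or fail to expand sufficiently, with a quantitative bound roughly of the form $m(\tau>n)\le C\sum_{k\ge \theta n} d_k^{-1}\log d_k^{-1}\, E_k^{-1}$, together with an analogous term from (A4). Our hypothesis $d_nE_n\ge C_0e^{c_0n}$ implies both $d_n^{-1}\log d_n^{-1}\,E_n^{-1}\le C e^{-c_0 n}$ (since $\log d_n^{-1}\le c_0n+O(1)$ and $d_n^{-1}E_n^{-1}=(d_nE_n)^{-1}\le C_0^{-1}e^{-c_0 n}$) and $E_n^{-1}\le C e^{-c_0 n}$ (as $d_n\le 1$). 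Thus the sums bounding $m(\tau>n)$ decay exponentially and we obtain $m(\tau>n)\le Ce^{-cn}$ for some $c>0$. This should be essentially a repackaging of the quantitative version of Corollary~\ref{cor-ALV}(a) hinted at in the paper, and I expect this to be the most technical step, as it requires trailing through the estimates of~\cite{ALV09} to extract the explicit exponential rate rather than merely the integrability used in Theorems~\ref{thm-ASIP} and~\ref{thm-CLT}.

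With exponential tails in hand, lifting to $f$ proceeds by the now-standard operator renewal framework of Gou\"ezel and Melbourne--T\"or\"ok. One writes
\[
L_f^n(vh) - h{\SMALL\int}v\,dm = \sum_{k=0}^{n} L_f^{n-k}\bigl(T_k v - \text{const}\bigr) + \text{error},
\]
where $T_k$ is the piece of the transfer operator coming from the set $\{\tau = k\}$, and uses the spectral gap of $L_F$ on $\BV$ together with $\|T_k\|\le C m(\tau\ge k)$. Because both the spectral contribution and the tail contribution decay exponentially, one concludes $\|L_f^n(vh)-h\int v\,dm\|_{L^1}\le C\|v\|_{\BV}e^{-cn}$, whence
\[
|\rho_{v,w}(n)| = \Bigl|\int \bigl(L_f^n(vh)-h{\SMALL\int}v\,dm\bigr) w\,dm\Bigr|\le C\|v\|_{\BV}|w|_\infty e^{-cn},
\]
which is the required bound.

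The principal obstacle is that the induced map $F$ is Rychlik but not Markov, so Young's tower machinery is not directly applicable; one either invokes a version of the renewal estimates adapted to Rychlik induced maps (which only uses the spectral gap of $L_F$ on $\BV$ and an $L^1$-type control on $T_k$) or, alternatively, further induces to a Gibbs--Markov sub-system at the cost of verifying that exponential tails and the spectral gap survive the refinement. Either route works, and in each case the delicate bookkeeping is at the level of~\cite{ALV09}: making the various estimates there quantitative enough to convert the assumption $d_nE_n\ge C_0e^{c_0 n}$ into an exponential tail for $\tau$.
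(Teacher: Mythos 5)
Your overall strategy (induce to the Rychlik map of~\cite{ALV09}, then run an operator renewal argument) matches the paper's, but the pivotal step is carried out in a way that fails in this setting. The bound $\|T_k\|\le C\,m(\tau\ge k)$ that your lifting argument relies on is precisely the ``sharp estimate for first return operators'' from~\cite{Sarig02,Gouezel04} which the paper explicitly says is \emph{unavailable} here: to deduce $\sup_a(1/|F'|)\le C\,m(a)$ (and hence an operator bound from a measure bound) one needs long branches ($m(Fa)$ bounded below) and uniformly bounded distortion on each branch, and the whole point of the~\cite{ALV09} construction is that neither is assumed. What the renewal machinery actually requires is exponential decay of the \emph{operator norms} $\|P_n\|$ on a Banach space where $P$ has a spectral gap; since $1_{\{\tau=n\}}v$ is only piecewise $\BV$, this forces the space $\mathcal{B}(Y)$ of Section~5 rather than $\BV$ itself, and estimating $\|P_n\|_{\mathcal{B}(Y)}$ requires controlling $\sum_{a:\tau(a)=n}\var_a(1/F')$ as well as $\sum_{a:\tau(a)=n}\sup_a(1/|F'|)$. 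By Proposition~\ref{prop-bind}(b,c) this is exactly where the quantities $E_b^{-1}$ and $d_{b-1}^{-1}\log d_{b-1}^{-1}E_{b-1}^{-1}$ --- and hence the full hypothesis $d_nE_n\ge C_0e^{c_0n}$ --- enter (Lemma~\ref{lem-*}(b)). Your detour through the Lebesgue tail $m(\tau>n)$ derives the right exponential bounds on these quantities but then discards the variation information by passing to a purely measure-theoretic statement that does not control the operator norm. Your fallback of refining further to a Gibbs--Markov subsystem is likewise not available in this generality: that is the restriction of~\cite{DiazHollandLuzzatto06} which the present paper is designed to avoid, and this is why the modification of the renewal method in~\cite{MTapp} (which replaces the sharp first-return estimates by summability/decay of $\|P_n\|$) is invoked instead.

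Two further omissions are worth flagging. First, the renewal theorem needs $1\notin\mathrm{spec}\,P(z)$ for $|z|\le 1$, $z\ne 1$; this aperiodicity does not follow formally from ``$f$ is mixing'' but requires the argument of Proposition~\ref{prop-H2} (via~\cite{ADSZ04} and mixing of the tower map), together with the $k$-cycle reduction at the end of Section~5 when the tower itself is not mixing. Second, since $v$ is $\BV$ on $I$ rather than on $Y$, one must transfer the correlation function for $f$ to the induced level; this is the exchangeability step (Proposition~\ref{prop-exchange}) and is absent from your outline. Finally, a small technical point: $\log d_n^{-1}\le c_0n+O(1)$ does not follow from $d_nE_n\ge C_0e^{c_0n}$ alone (one only gets $\log d_n^{-1}\le \log E_n-c_0n+O(1)$, and $E_n$ is not a priori exponentially bounded when singular points with unbounded derivative are present); fortunately only subexponential growth of $\log d_n^{-1}$ at rate below $c_0$ is needed for the estimate to close.
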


\begin{rmk} Note that Theorem~\ref{thm-expdecay} covers completely the situation
discussed before the statement of Theorem~\ref{thm-ALV},
where the derivative grows exponentially fast, and the recurrence is subexponential or exponential at a sufficiently slow rate.

Theorem~\ref{thm-expdecay} seems to subsume all previous
results in the literature about
exponential decay of correlations for interval maps with critical points and singularities.   
In particular, it extends the afore-mentioned results of D\'iaz-Ordaz {\em et al.}~\cite{DiazHollandLuzzatto06} and also results of Young~\cite{Young92} who used related methods (inducing to a map satisfying Rychlik's conditions) for
certain quadratic maps.
\end{rmk}

Next, we consider polynomial decay of correlations.

\begin{thm} \label{thm-decay}
Suppose that \( f: I \to I  \)  is mixing and
satisfies assumptions  (A1), (A2) and (A3$_p$) for some $p>1$.  
Then for any $q>0$, there exists $\delta>0$, $C>0$ such that
\[
|\rho_{v,w}(n)|\le C\|v\| |w|_\infty\Bigl\{\sum_{j>\delta n}\mu(\tau>j)+n\mu(\tau>\delta n)+n^{-q}\Bigr\},
\]
for all $v\in\BV$, $w\in L^\infty$, $n\ge1$.
\end{thm}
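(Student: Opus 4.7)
I would run an operator-renewal / Young-tower argument using the Rychlik induced map $F=f^\tau$ from Theorem~\ref{thm-ALV} as the base, combining (i)~the spectral gap of the transfer operator $L_F$ of $F$ on $\BV$ supplied by Rychlik's theorem (using that $f$, hence $F$ after restricting to the basic set, is mixing) with (ii)~the polynomial $L^p$ tail of $\tau$, which holds because (A3$_p$) implies (A4$_p$) and hence $\tau\in L^p$ by Corollary~\ref{cor-ALV}(a); in particular $\sum_{j\ge 1} j^{p-1}\mu(\tau>j)<\infty$.

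The setup is the standard Young tower $\Delta=\{(x,i):x\in I,\ 0\le i<\tau(x)\}$ with tower map $f_\Delta$ and projection $\pi(x,i)=f^i x$, so that correlations for $f$ lift to correlations on $\Delta$. Fix $q>0$ and a threshold $\delta\in(0,1)$ to be chosen small in terms of $q$. I would decompose the correlation $\rho_{v,w}(n)$ according to the excursion structure of the orbit between times $0$ and $n$.

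Three contributions arise. First, orbits whose excursion pending at some time in $[0,n]$ is still pending after time $n$ correspond to tower levels $j>\delta n$, and their total $\hat\mu$-mass is bounded by $\sum_{j>\delta n}\mu(\tau>j)$ via the excess-life formula for the tower; the trivial $L^\infty$ bound on $v,w$ then controls this piece. Second, the probability that \emph{some} excursion that starts during the time window has length $>\delta n$ is bounded, by a union bound over the starting points, by $n\,\mu(\tau>\delta n)$, producing the second term. Third, on the complementary ``good'' event every excursion in the window is short, and then the $\BV$-spectral gap of $L_F$ applied within the renewal-equation decomposition drives the correlation down at an exponential rate $e^{-c(\delta)n}$; this is dominated by $n^{-q}$ once $\delta$ is small enough relative to $q$. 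Summing the three contributions produces the announced inequality.

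The main obstacle is that the Rychlik base $F$ is \emph{not} Gibbs-Markov, so the classical Young-tower polynomial decay results cannot be imported verbatim and one must re-run the renewal analysis in the $\BV$ framework. Concretely: one has to verify that the first-return operators $R_k(\cdot)=L_F(\mathbf{1}_{\{\tau=k\}}\,\cdot\,)$ are uniformly bounded on $\BV$ with tails of the operator norms controlled by $\mu(\tau>k)$, that pushing $\BV$ densities through non-returning iterates of $f$ preserves bounded variation with uniform constants, and that the inversion of the resulting renewal operator matches the three quantities on the right-hand side of the bound. These $\BV$-level analytic estimates, replacing the usual Gibbs-Markov calculations, constitute the real technical content; once they are in place, the three-term bound follows by the decomposition above.
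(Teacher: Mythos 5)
Your overall strategy --- operator renewal theory over the Rychlik induced map $F=f^\tau$, combining a spectral gap on a $\BV$-type space with tail control on the return time --- is the same route the paper takes (via the Melbourne--Terhesiu framework~\cite{MTapp}). However, there is a genuine gap at precisely the step you identify as ``the real technical content''. You propose to verify that the first-return operators $R_k v=L_F(1_{\{\tau=k\}}v)$ have operator-norm tails ``controlled by $\mu(\tau>k)$''. That is exactly the estimate that is \emph{unavailable} in this setting: without long branches or a Markov structure for $F$, the $\BV$-norm of $R_k$ is governed by $\sum_{a:\tau(a)=k}\{\sup_a(1/|F'|)+\var_a(1/F')\}$, and by Proposition~\ref{prop-bind} these quantities are comparable to $E_k^{-1}$ and $d_{k-1}^{-1}\log d_{k-1}^{-1}E_{k-1}^{-1}$, not to the measure of the level set $\{\tau=k\}$. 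If $\|R_k\|$ could be bounded by $\mu(\tau=k)$, the classical Sarig--Gou\"ezel renewal estimates would apply directly; the whole point of the modification in~\cite{MTapp}, which the paper invokes, is to replace that sharp per-operator bound by the weaker summability hypothesis $\sum_n n^{p-1}\sum_{j>n}\|R_j\|<\infty$ for some $p>1$, while still producing a decay rate expressed in terms of $\mu(\tau>n)$ (this is the ``surprising aspect'' of Remark 1.8). This is also where (A3$_p$) enters in full strength: you use it only through its consequence (A4$_p$) to get $\tau\in L^p$, but the variation part of $\|R_j\|$ requires the summability of $n^p d_n^{-1}\log d_n^{-1}E_n^{-1}$ from (A3$_p$) itself (Lemma~\ref{lem-*}(a), via Proposition~\ref{prop-bind}(c)). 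Without this, your ``good event / spectral gap'' step has no operator-level input to run on.

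Two further points. First, the spectral gap for $L_F$ on $\BV$ is not sufficient for the renewal inversion: one also needs aperiodicity, i.e.\ that $1$ is not in the spectrum of the twisted operators $P(z)=\sum_n P_n z^n$ for $|z|\le1$, $z\ne1$ (Proposition~\ref{prop-H2}(ii)); mixing of $f$ does not immediately give mixing of the tower, and the paper must handle a possible $k$-cycle at the end of Section~\ref{sec-decay}. Second, a $\BV$ observable on $I$ does not simply restrict to a $\BV$ observable compatible with the induced dynamics; one needs the ``exchangeability'' estimate (Proposition~\ref{prop-exchange}) and the enlarged space $\mathcal{B}(Y)=h^{-1}\mathcal{B}_1(Y)$ of uniformly piecewise $\BV$ functions, again because $F$ is not Markov. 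Your heuristic three-term decomposition correctly predicts the shape of the final bound, but those terms are produced by the truncation argument inside~\cite{MTapp} rather than by a pathwise union bound, and the proof cannot be completed as written without repairing the operator-norm estimate above.
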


\begin{rmk}
(1) 
Under the hypotheses of Theorem~\ref{thm-decay}, if $\mu(\tau>n)=\mathcal O(n^{-(\beta+1)})$ for some $\beta>0$, then taking $q=\beta$ we
obtain the optimal decay rate $\rho_{v,w}(n)=\mathcal O(1/n^{\beta})$.
A surprising aspect, inherited from~\cite{MTapp}, is that we obtain optimal
results with minimal assumptions on growth rates of derivatives and recurrence
rates (since any $p>1$ suffices regardless of the size of $\beta$).

\noindent (2)
In particular, if the hypotheses of Theorem~\ref{thm-decay} hold,
and $\mu(\tau>n)$ decays superpolynomially (faster than any
polynomial rate), then we obtain superpolynomial decay of correlations.
Hence, if $f$ is mixing and satisfies (A1), (A2), (A3$_p$) for some $p>1$, and (A4$_p$) for
all $p$, then we obtain superpolynomial decay of
correlations (since (A4$_p$) implies the required assumption on $\mu(\tau>n)$).

\noindent (3)
In the absence of singularities, i.e.\ for smooth maps, stronger versions of these  results are known.  We
mention the remarkable result by Rivera-Letelier \& Shen~\cite{RiveraShen} who prove superpolynomial decay of correlations for multimodal maps
under the assumption that $D_n\to\infty$ for all critical points
(no growth rate required).   An interesting open problem is whether  this
result can be generalized to interval maps with critical points and
singularities.
\end{rmk}

\begin{cor} \label{cor-LD}
Suppose that \( f: I \to I  \)  is mixing and
satisfies assumptions  (A1), (A2) and (A3$_p$) for some $p>1$.  
Suppose further that $\mu(\tau>n)=O(n^{-(\beta+1)})$ for some $\beta>0$.
Then we obtain the following (optimal) large deviation estimate:
For any $v\in\BV$ with $\int_Xv\,d\mu=0$ and any $\epsilon>0$, there
exists a constant $C>0$ such that
$\mu(|\sum_{j=0}^{n-1}v\circ f^j|\ge \epsilon n)\le Cn^{-\beta}$ for all $n\ge1$.
\end{cor}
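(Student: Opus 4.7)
The plan is to derive Corollary~\ref{cor-LD} as a direct combination of Theorem~\ref{thm-decay} with a general ``decay of correlations implies large deviations'' result for polynomial rates.

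First, I would feed the hypothesis $\mu(\tau>n)=O(n^{-(\beta+1)})$ into Theorem~\ref{thm-decay} with the choice $q=\beta$. Each of the three terms on the right-hand side is $O(n^{-\beta})$: the tail sum gives $\sum_{j>\delta n}\mu(\tau>j)=O(n^{-\beta})$ by comparing with a convergent series, the middle term is $n\mu(\tau>\delta n)=O(n\cdot n^{-(\beta+1)})=O(n^{-\beta})$, and the last term equals $n^{-\beta}$ by choice of $q$. This yields
\[
|\rho_{v,w}(n)|\le C\|v\|\,|w|_\infty\, n^{-\beta}
\qquad \text{for all } v\in\BV,\ w\in L^\infty,\ n\ge 1,
\]
which is the optimal rate noted in Remark~(1) following Theorem~\ref{thm-decay}.

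Second, I would invoke the standard abstract result (as developed in Melbourne's work on large deviations for slowly mixing systems, and related papers of Melbourne--Nicol) which asserts: if a probability-preserving system satisfies $|\rho_{v,w}(n)|\le C\|v\|_{\BV}|w|_\infty\, n^{-\beta}$ for all $\BV$ observables $v$ and $L^\infty$ observables $w$, then for any mean-zero $v\in\BV$ and any $\epsilon>0$ there is $C'>0$ such that $\mu(|\phi_n|\ge \epsilon n)\le C' n^{-\beta}$. The underlying mechanism is to control even moments $\int \phi_n^{2k}\,d\mu$ by expanding the product, bounding each of the resulting multilinear correlation expressions via the polynomial decay rate applied iteratively, and then applying Markov's inequality with $k$ taken large enough to make the moment bound useful (in fact $k$ only needs to be chosen as a function of $\beta$, not $\epsilon$).

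The main (and rather minor) obstacle is verifying that the quoted abstract large deviation result applies in the $\BV$ vs.\ $L^\infty$ setting used here, which is precisely the classical Rychlik framework in which these results were originally formulated; the input hypotheses from Step~1 match verbatim. Note that the rate $n^{-\beta}$ is optimal: in the absence of further dynamical information, one cannot improve large deviation estimates beyond the decay of correlations rate, so Step~1's optimality transfers directly to the conclusion.
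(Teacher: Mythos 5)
Your proposal matches the paper's proof: the corollary is obtained exactly by feeding the tail bound $\mu(\tau>n)=O(n^{-(\beta+1)})$ into Theorem~\ref{thm-decay} with $q=\beta$ (each of the three terms is $O(n^{-\beta})$, as you check) and then citing the abstract large deviation result \cite[Theorem~1.2]{M09}, which slightly improves \cite{MN08} and converts the rate-$n^{-\beta}$ correlation bound for $\BV$ against $L^\infty$ into the $O(n^{-\beta})$ estimate. One caveat on your parenthetical sketch of the ``underlying mechanism'': expanding $\int\phi_n^{2k}\,d\mu$ and bounding the terms by iterated pair-correlation estimates is not how \cite{M09,MN08} proceed and would not work as stated, since decay of pair correlations does not control multiple correlations --- those papers use a martingale--coboundary decomposition --- but as you invoke the result as a black box this does not affect the correctness of your argument.
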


\begin{proof}   This follows immediately from Theorem~\ref{thm-decay}
by~\cite[Theorem~1.2]{M09} which slightly improves a result of~\cite{MN08}.
\end{proof}

\subsection{Strategy of the proofs}
\label{subsec:strategy}

We end this introduction with a few remarks about the strategy of the proofs.
Given $\phi:[0,1]\to\R^d$, we
define the {\em induced observable} $\Phi:[0,1]\to\R^d$ by setting
$\Phi(y)=\sum_{j=0}^{\tau(y)-1}\phi(f^jy)$.
If $\int_I\phi\,d\mu=0$, then $\int_I\Phi\,d\hat\mu=0$.
The main part of the proof of Theorems~\ref{thm-ASIP} and~\ref{thm-CLT} is
to establish the corresponding statistical limit laws for the induced
observable $\Phi$ under the induced dynamical system $F$.   The
limit laws for $\phi$ follow in a by now standard way~\cite{Gouezel07,MT04,MZprep}.  

At the level of the induced map, it would follow easily from~\cite{Rychlik83}
that the (F)CLT holds for observables in $\BV$.    Extra work is required
since the induced observable $\Phi$ is only piecewise $\BV$ (on elements of the partition $\alpha$ with norm that grows with $\tau$).
Our method closely follows the approach of Melbourne \& Nicol~\cite{MN05} for 
piecewise H\"older observables and Gibbs-Markov maps.    A scalar
ASIP could also be proved along the same lines, and a vector-valued ASIP following~\cite{MN09}, but these would again require a condition on $h^{-1}$.   
Instead we apply a recent result of Gou\"ezel~\cite{Gouezel10} which sidesteps
this issue.

For decay of correlations, the strategy is somewhat different.  
The induced map $F$ still plays a major role but 
we do not consider induced observables.
Instead we use the method of operator renewal sequences~\cite{Sarig02,Gouezel04} to relate decay rates of the transfer operator for $f$ to
information about the transfer operator for $F$.  However certain key
estimates of first return operators in~\cite{Sarig02,Gouezel04} are
unavailable since we are not assuming long branches for $F$.
A modification of the method due to Melbourne \& Terhesiu~\cite{MTapp}
avoids sharp estimates for these operators and still yields optimal results.

In Section~\ref{sec-ALV}, we recast Theorems~\ref{thm-ASIP} and~\ref{thm-CLT}
in terms of conditions on the induced map $F$.
In Section \ref{sec:ind}, we prove limit theorems for the induced map. In Section \ref{sec-orig}, we show how the limit laws pass to the original map.
In Section~\ref{sec-decay}, we prove Theorems~\ref{thm-expdecay}
and~\ref{thm-decay}.

\section{Background from~\cite{ALV09}}
\label{sec-ALV}

We recall some aspects of the construction from~\cite{ALV09} that are needed for our results, in particular the definition of the partition \( \alpha \) and the associated return time function \( \tau \).
First $\delta$ is chosen small enough (prescribing the neighbourhood $\Delta$ of the critical/singular set $\mathcal{C}$).  
To each $x\in \Delta$ is associated an integer $b(x)\ge1$, called the {\em binding period} of \( x \),
with certain properties listed in Proposition~\ref{prop-bind} below.    Next, $q_0\ge1$ is fixed sufficiently large.
For $x\in I$, define $\ell_0(x)=\min\{j\ge0:f^jx\in\Delta\}$.
The return time function $\tau:I\to\R$ is then given by $\tau(x)=q_0$
if $\ell_0(x)\ge q_0$ and $\tau(x)=\ell_0(x)+b(f^{\ell_0(x)}(x))$ otherwise.
In the latter case, we say that $x$ has binding period $b(x)=b(f^{\ell_0(x)}(x))$.
The partition \( \alpha \) has the property that the binding period $b$ and return time function $\tau$ are constant on partition elements and that the induced map $F=f^\tau$ is smooth and monotone on partition elements.
Given $b\ge1$, we define $\alpha(b)$ to consist of those partition elements $a$ containing points with binding period $b$ and we let
 $\alpha(0)$ consist of the remaining partition elements.

\begin{prop}[Ara\'ujo {\em et al.}~\cite{ALV09}] \label{prop-bind}
There exist constants $C,M>0$ such that
\begin{itemize}
\item[(a)]  $\#\alpha(b)\le M$ for all $b\ge0$.
\item[(b)]  If $a\in\alpha(b)$, then
$\sup_a|1/F'|\le CE_b^{-1}$.
\item[(c)]  If $a\in\alpha(b)$, then
$\var_a|1/F'|\le C(1+\log d_{b-1}^{-1})d_{b-1}^{-1}E_{b-1}^{-1}$.
\end{itemize}
\end{prop}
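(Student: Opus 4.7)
My plan is to treat Proposition~\ref{prop-bind} as a careful extraction of the binding-period machinery of~\cite{ALV09}. The work is not really to reinvent anything, but to isolate exactly which estimates in that construction yield the three bounds, and to rewrite them in terms of $E_b$ and $d_b$. In particular, I would first recall that on an element $a\in\alpha(b)$ with $b\ge1$ the return time has the form $\tau=\ell_0+b$, where $\ell_0$ counts free iterates staying $\delta$-away from $\mathcal{C}$, and then $b$ counts a binding period controlled by some critical point $c\in\mathcal C$ chosen at the moment $f^{\ell_0}a\subset\Delta(c,\delta)$. This decomposition $F|_a=f^b\circ f^{\ell_0}|_a$ is the template for all three estimates.

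For \textbf{(a)}, I would argue combinatorially. The partition $\alpha$ is refined at each step by the requirements that $\ell_0$, the choice of $c$, and the binding length $b$ are constant on elements. Since $\mathcal C$ is finite, only the binding structure matters. For fixed $c$ and fixed binding length $b$, the points with binding period exactly $b$ at $c$ form an interval in $\Delta(c,\delta)$ (essentially the set where the derivative growth first reaches the binding threshold at step $b$), cut at most into $O(1)$ monotone pieces by $f^b$. Pulling back by the uniformly expanding, piecewise monotone map $f^{\ell_0}$ can split this into at most a bounded number (depending only on the number of branches of the base expansion lemma) of partition elements. Summing over $c\in\mathcal C$ gives a uniform bound $M$.

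For \textbf{(b)}, the binding-period lemma from~\cite{ALV09} is the engine: for any $y\in\Delta(c,\delta)$ with binding period $b$,
\[
|(f^{b})'(y)|\gtrsim D_{b-1}(c)^{1/(2\ell(c)-1)} = E_b(c).
\]
Indeed this is where the exponent $1/(2\ell-1)$ arises, by balancing the loss $d(y,c)^{\ell-1}$ from the first derivative of $f$ at a critical point against the gain $D_{b-1}$ on the critical orbit, under the Benedicks--Carleson style binding criterion. Combined with (A2) applied to the first $\ell_0$ iterates (where $d(f^jx,\mathcal C)>\delta$), we obtain $|F'|\gtrsim \kappa\,E_b$ on $a$, which is the stated bound.

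For \textbf{(c)}, which I expect to be the main obstacle, the strategy is standard distortion accounting but with attention to the final return. Writing $\log|F'(y)|=\sum_{j=0}^{\tau-1}\log|f'(f^j y)|$ and differentiating, $\var_a\log|F'|$ is controlled by the sum of $\sup|f''/f'|\cdot|f^j(a)|$ over $j<\tau$. On the free iterates and on the bulk of the binding period the standard binding estimates in~\cite{ALV09} give a uniformly bounded contribution. The nontrivial term is the last binding step, where $f^{b-1}a$ lands in $\Delta(c,d_{b-1})$: there $|f''/f'|\approx d_{b-1}^{-1}$ and $|f^{b-1}a|$ is of the order of the size of the piece near the critical point, yielding a contribution of size $\log d_{b-1}^{-1}$ from integrating $d(\cdot,c)^{-1}$ across the image. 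Converting from $\var\log|F'|$ back to $\var|1/F'|$ multiplies by $\sup|1/F'|\lesssim E_{b-1}^{-1}$ (comparable to $E_b^{-1}$ up to a constant, as $\ell(c)>1$), and together with the $d_{b-1}^{-1}$ factor from the final step gives the stated bound $C(1+\log d_{b-1}^{-1})d_{b-1}^{-1}E_{b-1}^{-1}$. The main delicacy is to show that the contribution of the last step indeed dominates and that all other steps assemble into a multiplicative constant; for this I would lean on the explicit distortion estimates of~\cite{ALV09} rather than reprove them, verifying only that the bookkeeping matches the $E_{b-1}$ and $d_{b-1}$ normalisations used in the rest of the present paper.
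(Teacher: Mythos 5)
Your proposal takes essentially the same route as the paper, whose proof of this proposition is simply a citation: part (a) follows from the finiteness of the branch structure of the construction (the point making your pull-back count finite being that $\ell_0<q_0$ on binding elements), while parts (b) and (c) are precisely Equations~(19) and~(20) of~\cite{ALV09}, which your sketch of the binding-period derivative estimate and the distortion bookkeeping correctly paraphrases. Since you, like the paper, ultimately defer to the explicit estimates of~\cite{ALV09} for the hard parts, there is nothing substantive to fault.
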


\begin{proof}
Part (a) follows from the construction in~\cite{ALV09} since $f$
has only finitely many branches.
Parts (b) and (c) are~\cite[Equation~(19)]{ALV09} 
and~\cite[Equation~(20)]{ALV09} respectively.~
\end{proof}

We use Proposition~\ref{prop-bind} to convert conditions (A1)--(A4)
into conditions on the induced map $F$.
For $p\ge0$, introduce the conditions
\begin{description}
\item[(F1$_p$)]   $\sum_{a\in\alpha}{\SMALL\sup}_a(1/|F'|)\tau(a)^p<\infty$.
\item[(F2$_p$)]   $\sum_{a\in\alpha}{\SMALL\var}_a(1/|F'|)\tau(a)^p<\infty$.
\end{description}
Here \( \SMALL\sup_{a}(1/|F'|)=|1_a(1/F')|_\infty \), and \( \var\limits_a(1/F') \) denotes the variation of the function \( 1/F' \) on the interval \( a\in\alpha \).

\begin{cor} \label{cor-ALV}
\begin{itemize}
\item[(a)]  If $\sum_{n=1}^\infty
n^p E_n^{-1}< \infty$, then 
(F1$_p$) holds and
$\tau\in L^p(m)$.
\item[(b)]  If $\sum_{n=1}^\infty
n^pd_n^{-1}\log d_n^{-1}E_n^{-1}<\infty$,
then (F2$_p$) holds.
\end{itemize}
\end{cor}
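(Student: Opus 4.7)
The plan is to split $\alpha = \alpha(0) \cup \bigcup_{b\ge1}\alpha(b)$ according to binding period and apply Proposition~\ref{prop-bind} termwise. The key preliminary observation is that on $a \in \alpha(b)$ with $b\ge 1$ one has $\tau(a) = \ell_0(a) + b$ with $\ell_0(a) < q_0$, so $\tau(a) \le q_0 + b$, while $\tau \equiv q_0$ on $\alpha(0)$; in particular $\tau(a)^p \lesssim (1+b)^p$ uniformly on $\alpha(b)$. The contribution of $\alpha(0)$ to both (F1$_p$) and (F2$_p$) is bounded by a constant, since $\#\alpha(0) \le M$ by Proposition~\ref{prop-bind}(a), $\tau$ is bounded there, and (A2) together with smoothness of $F|_a = f^{q_0}$ yields uniform bounds on both ${\SMALL\sup}_a(1/|F'|)$ and ${\SMALL\var}_a(1/|F'|)$.

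For (a), I would combine parts (a) and (b) of Proposition~\ref{prop-bind} to obtain $\sum_{a\in\alpha(b)}{\SMALL\sup}_a(1/|F'|) \le MC\,E_b^{-1}$, so that
\[
\sum_{b\ge 1}\sum_{a\in\alpha(b)}{\SMALL\sup}_a(1/|F'|)\,\tau(a)^p \;\lesssim\; \sum_{b\ge1}b^p\,E_b^{-1} \;<\; \infty
\]
by hypothesis, which gives (F1$_p$). For the $L^p$-integrability of $\tau$, I would use that $F|_a$ is monotone onto its image in $I$, so by change of variables $m(a) \le {\SMALL\sup}_a(1/|F'|)\, m(F(a)) \le {\SMALL\sup}_a(1/|F'|)$; hence
\[
\int\tau^p\,dm \;=\; \sum_{a\in\alpha}\tau(a)^p\, m(a) \;\le\; \sum_{a\in\alpha}\tau(a)^p\,{\SMALL\sup}_a(1/|F'|),
\]
which is precisely the sum already controlled by (F1$_p$).

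Part (b) is structurally identical, using Proposition~\ref{prop-bind}(c) in place of (b): summing over $a\in\alpha(b)$ and then $b\ge 1$ reduces, after the harmless reindexing $n = b-1$, to the assumed convergence of $\sum_n n^p d_n^{-1}\log d_n^{-1}\,E_n^{-1}$. Honestly there is no genuine obstacle here — Proposition~\ref{prop-bind} packages all the analytic work, and what remains is bookkeeping: decompose by $b$, bound $\tau(a)$ linearly in $b$, sum. The one point that deserves a brief pause is the passage from (F1$_p$) to $\tau\in L^p(m)$, where one must observe that $m(a)$ is controlled \emph{by} ${\SMALL\sup}_a(1/|F'|)$ rather than the reverse; this is the standard distortion/change-of-variables observation for a monotone branch mapping into $[0,1]$, and it is what allows the $L^p$-integrability of $\tau$ to come for free from (F1$_p$).
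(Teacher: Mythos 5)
Your proposal is correct and follows essentially the same route as the paper: decompose $\alpha$ by binding period, apply Proposition~\ref{prop-bind}(a,b) (resp.\ (a,c)) together with $\tau(a)\le q_0+b$ to reduce (F1$_p$) and (F2$_p$) to the assumed series, and deduce $\tau\in L^p(m)$ from $m(a)\le\sup_a(1/|F'|)$ via the mean value theorem. The only cosmetic difference is that you treat $\alpha(0)$ separately, whereas the paper absorbs $b=0$ into the single bound $CM\sum_{b\ge0}E_b^{-1}(b+q_0)^p$.
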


\begin{proof}
It follows from the definition of $\tau$ and Proposition~\ref{prop-bind}(a,b)
that
\[
\sum_{a\in\alpha}{\SMALL\sup}_a(1/|F'|)\tau(a)^p
=\sum_{b=0}^\infty\sum_{a\in\alpha(b)}{\SMALL\sup}_a(1/|F'|)\tau(a)^p
\le CM \sum_{b=0}^\infty E_b^{-1}(b+q_0)^p<\infty.
\]
Also, by the mean value theorem $m(a)\le \sup_a(1/|F'|)m(Fa)\le \sup_a(1/|F'|)$,
so it follows similarly that
\[
\int|\tau|^p\,dm=\sum_{a\in\alpha}m(a)\tau(a)^p<\infty,
\]
proving part (a).
In the same manner, part (b) follows from Proposition~\ref{prop-bind}(a,c).
\end{proof}

Conditions (F1$_0$) and (F2\( _{0} \)) imply that \( F \) satisfies the \emph{Rychlik conditions} \cite{Rychlik83} and thus  admits an \( F \)-invariant absolutely continuous probability measure \( \hat\mu \) with a bounded density \( h= d\hat\mu/dm\in \BV \). The condition $\tau\in L^1(m)$ then implies, by standard arguments, that there exists a corresponding \( f \)-invariant absolutely continuous probability measure \( \mu \). 
  
The crucial ingredient in the proof of  Theorems~\ref{thm-ASIP}
and~\ref{thm-CLT} turns out to be existence of an induced map satisfying  
(F1$_1$) and (F2\( _{\epsilon} \)) and a condition on the integrability of the return time function \( \tau \). Assumptions (A1) and (A2) are important in the construction of such an induced map, but become redundant once such a map is assumed. By  Corollary~\ref{cor-ALV},  Conditions (A3\( _{p} \)) and (A4\(_{p}  \)) imply (F1$_1$) and (F2\( _{p} \)) and thus  we reduce Theorems~\ref{thm-ASIP}
and~\ref{thm-CLT} to the following.

\begin{thm}   \label{thm-ASIP2}
Suppose that \( f: I \to I \) admits an induced map satisfying (F1$_1$) and (F2\( _{\epsilon} \)) for some \( \epsilon>0 \) and that $\tau\in L^p$ for some $p>2$. 
Let $\phi:I\to\R^d$ be an observable in $\BV$
with $\int_{I}\phi\,d\mu=0$.   
Then $\phi$ satisfies the vector-valued ASIP for any  $\lambda>p/(4p-4)$.
\end{thm}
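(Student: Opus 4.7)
The plan is to reduce the theorem to a vector-valued ASIP for the induced system $(F,\hat\mu)$ applied to the induced observable $\Phi:I\to\R^d$ defined by $\Phi(y)=\sum_{j=0}^{\tau(y)-1}\phi(f^jy)$, and then to transfer this back to $(f,\mu)$. Since $\int_I\phi\,d\mu=0$ and $\mu$ is the pushforward of the suspension measure over $(F,\hat\mu)$ with roof $\tau$, one has $\int_I\Phi\,d\hat\mu=0$. The transfer from induced to original is by now standard: assuming $\tau\in L^p$, one uses the lifting arguments of~\cite{MT04,MZprep} to pass from an induced ASIP with rate $n^{\lambda_0}$ to an ASIP on $(f,\mu)$, picking up an additional error from truncated tail sums of $\tau$. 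Balancing the two contributions produces the final rate $\lambda>p/(4p-4)$.

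For the induced system, (F1$_0$) and (F2$_0$) (both implied by (F1$_1$) and (F2$_{\epsilon}$)) place $F$ in Rychlik's framework~\cite{Rychlik83}, so $\hat\mu$ has $\BV$ density $h$ and the transfer operator $L$ of $F$ with respect to $m$ has a spectral gap on $\BV(I)$ with a one-dimensional eigenspace at $1$. The induced observable $\Phi$ is typically not in $\BV$ globally; on each $a\in\alpha$ we only have the elementary bounds $|\Phi|_{L^\infty(a)}\le \tau(a)|\phi|_\infty$ and $\var_a\Phi\le \tau(a)\,\|\phi\|_{\BV}$. The technical core is to convert these pointwise bounds into weighted $L^p(\hat\mu)$ and weighted-variation estimates, summing against $m(a)$ and $\sup_a(1/|F'|)$ and invoking (F1$_1$), (F2$_{\epsilon}$), and $\tau\in L^p$; in particular (F1$_1$) guarantees $\int\tau\,d\hat\mu<\infty$ so that $\mu$ is well-defined after normalization.

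The ASIP for $\Phi$ under $F$ I would then obtain via the Gordin martingale-coboundary decomposition $\Phi=\chi+\psi-\psi\circ F$ with $\psi=\sum_{n\ge 1}\hat L^n\Phi$, where $\hat L$ is the transfer operator of $F$ with respect to $\hat\mu$. The spectral gap of $L$ on $\BV$, combined with the weighted estimates on $\Phi$ above, yields absolute convergence of the series in $L^p(\hat\mu)$ and produces a reverse martingale difference $\chi\in L^p(\hat\mu)$. Following the strategy of~\cite{MN05}, one could try to apply a Philipp--Stout vector-valued martingale ASIP, but this would require time reversal and hence the hypothesis $h^{-1}\in L^1(m)$. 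Instead, I would invoke Gou\"ezel's abstract criterion~\cite{Gouezel10}, which is designed to operate directly in the reverse-martingale setting; the characteristic-function estimates it needs are immediate consequences of the spectral gap of $L$ on $\BV$ together with the $L^p$ bounds on $\Phi$.

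The main obstacle is twofold: first, $\Phi$ is only piecewise $\BV$ with norms growing like $\tau$, so every estimate for the coboundary series must be performed in weighted norms rather than in $\BV$ itself; second, the absence of any long-branch or finite-range structure on $F$ rules out Gibbs--Markov methods, forcing us to work with Rychlik's spectral theorem on $\BV$, while the lack of a usable lower bound on $h$ rules out the standard vector-valued martingale ASIP of~\cite{MN09}. Both difficulties are resolved by combining the Rychlik spectral gap with Gou\"ezel's criterion~\cite{Gouezel10}. Once the ASIP on $(F,\hat\mu)$ is established, the suspension arguments of~\cite{MT04,MZprep} and the integrability $\tau\in L^p$ yield the ASIP on $(f,\mu)$ with the claimed rate.
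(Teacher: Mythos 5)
Your proposal is correct and follows essentially the same route as the paper: induce, verify the hypotheses of Gou\"ezel's criterion~\cite{Gouezel10} for the perturbed transfer operators using (F1$_1$) and (F2$_\epsilon$) to control the piecewise-$\BV$ induced observable $\Phi$, and lift the resulting ASIP back to $(I,\mu,f)$ via~\cite{DenkerPhilipp84,MT04}. One caveat: \cite{Gouezel10} is a characteristic-function/spectral criterion, not a reverse-martingale one, so the Gordin decomposition in your third paragraph is a detour you rightly abandon, and the step you call ``immediate'' is in fact the technical heart --- the continuity estimate $\|L_t-L_0\|_{\BV}=\mathcal{O}(|t|^\epsilon)$, which the paper proves via a careful term-by-term variation estimate and then transfers to $P_t$ acting on $\mathcal{B}=h^{-1}\BV$ precisely so that no hypothesis on $h^{-1}$ is needed.
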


\begin{thm}   \label{thm-CLT2}
Suppose that \( f: I \to I \) admits an induced map satisfying (F1$_1$) and (F2\( _{\epsilon} \)) for some \( \epsilon>0 \) and that $\tau\in L^2$. 
Suppose further that $h^{-1}\in L^1(m)$. 
Let $\phi:I\to\R^d$ be an observable in $\BV$
with $\int_{I}\phi\,d\mu=0$.   
Then $\phi$ satisfies the CLT and functional CLT.
\end{thm}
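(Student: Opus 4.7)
My plan is to prove Theorem~\ref{thm-CLT2} by first establishing the CLT and FCLT for the induced observable $\Phi(y)=\sum_{j=0}^{\tau(y)-1}\phi(f^j y)$ under the induced map $F$, and then using standard transfer results for integrable return times (e.g.\ \cite{MT04, MZprep}) to pass the limit laws back to $\phi$ under $f$. Since $\phi$ is bounded, $\tau\in L^2(m)$, and $h\in L^\infty$ (by Rychlik), we have $\Phi\in L^2(\hat\mu)$ with $\int\Phi\,d\hat\mu=\int\phi\,d\mu=0$, so the core task is the limit laws for $\Phi$ under $F$.

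Conditions (F1$_1$) and (F2$_\epsilon$) imply (F1$_0$) and (F2$_0$), so $F$ satisfies Rychlik's conditions and its transfer operator $P$ with respect to $m$ has a spectral gap on $\BV$: $\|P^n g-h\int g\,dm\|_\BV\le C\rho^n\|g\|_\BV$ for some $\rho<1$. The assumption $h^{-1}\in L^1(m)$ enters precisely when upgrading this to $L^2(\hat\mu)$-decay of the transfer operator $Q$ of $F$ with respect to $\hat\mu$, which is related to $P$ by $Q\psi=P(\psi h)/h$. Indeed, for $\psi\in\BV$ with $\int\psi\,d\hat\mu=0$,
\[
\|Q^n\psi\|_{L^2(\hat\mu)}^2=\int|P^n(\psi h)|^2h^{-1}\,dm\le\|P^n(\psi h)\|_\infty^2\,\|h^{-1}\|_{L^1(m)}\le C\rho^{2n}\|\psi\|_\BV^2\,\|h^{-1}\|_{L^1(m)},
\]
since the BV norm dominates the $L^\infty$ norm. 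Without $h^{-1}\in L^1$ one only obtains $L^\infty$-decay, which is insufficient for the approximation argument below.

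Because $\Phi$ is only piecewise BV on $\alpha$, with local variation on each $a$ of order $\tau(a)\|\phi\|_\BV$, I would approximate it by the BV truncation $\Phi_N=\Phi\cdot\mathbf 1_{\{\tau\le N\}}$. By Proposition~\ref{prop-bind}(a) there are $O(N)$ partition elements with $\tau(a)\le N$, each contributing at most $O(N)$ to the BV norm (combining interior variation bounded by $N\|\phi\|_\BV$ and boundary jumps bounded by $2N|\phi|_\infty$), so $\|\Phi_N\|_\BV=O(N^2)$. Meanwhile $\|\Phi-\Phi_N\|_{L^2(\hat\mu)}^2\le|\phi|_\infty^2\int_{\{\tau>N\}}\tau^2\,d\hat\mu\to 0$ as $N\to\infty$. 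Following the scheme of Melbourne--Nicol~\cite{MN05}, one combines the exponential $L^2(\hat\mu)$-decay of $Q^n$ on BV with this $L^2$-smallness of the tail to produce a Gordin-type martingale--coboundary decomposition $\Phi=m+g-g\circ F$ with $g\in L^2(\hat\mu)$ and $m$ a reverse martingale difference; the CLT and FCLT for $\Phi$ under $F$ then follow from the classical martingale central limit theorems.

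The main obstacle is that $\tau$ is assumed to be only in $L^2$: the tail error $\|\Phi-\Phi_N\|_{L^2(\hat\mu)}$ converges to $0$ at no prescribed rate, while $\|\Phi_N\|_\BV$ grows polynomially in $N$, so the naive bound $\|Q^n\Phi\|_{L^2(\hat\mu)}\le C\rho^n N^2+\|\Phi-\Phi_N\|_{L^2(\hat\mu)}$ is not obviously summable in $n$. Circumventing this requires the more delicate argument of~\cite{MN05}, working with partial sums $\sum_{k<n}Q^k\Phi$ rather than individual $Q^n\Phi$ and exploiting the Hilbert-space contraction property of $Q$. Once the limit laws for $\Phi$ under $F$ are established, the transfer back to $\phi$ under $f$ is standard given $\tau\in L^1(m)$.
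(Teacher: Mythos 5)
Your overall architecture matches the paper's (induce, prove a Gordin-type martingale--coboundary decomposition for $\Phi$ under $F$, lift back via the tower), and you correctly locate where $h^{-1}\in L^1(m)$ enters: converting the spectral-gap decay of the Lebesgue transfer operator into $L^2(\hat\mu)$-decay of the normalized operator. However, the central technical step is not actually carried out. Your truncation scheme $\Phi_N=\Phi\cdot 1_{\{\tau\le N\}}$ produces the bound $\|Q^n\Phi\|_{L^2(\hat\mu)}\le C\rho^n\|\Phi_N\|_{\BV}+\|\Phi-\Phi_N\|_{L^2(\hat\mu)}$, and as you yourself observe, with $\tau$ only in $L^2$ the tail term goes to zero at no prescribed rate while $\|\Phi_N\|_{\BV}$ grows polynomially; this does not yield summability of $\|Q^n\Phi\|_{L^2}$, nor even boundedness of the partial sums $\sum_{k\le n}Q^k\Phi$ in $L^2$, which is what the coboundary construction $\chi=\sum_{n\ge1}Q^n\Phi$ requires. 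Deferring to ``the more delicate argument of \cite{MN05}'' is precisely where the proof lives, and the mechanism there is not a partial-sums or contraction trick.

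The missing idea is the paper's Lemma~\ref{lem-tau}: under (F1$_1$) and (F2$_1$), the transfer operator $L$ with respect to Lebesgue measure maps the weighted space $\BV_\tau$ \emph{boundedly into} $\BV$. One then writes $P^n\Phi=h^{-1}L^{n-1}\bigl(L(h\Phi)\bigr)$; since $h\in\BV$ implies $h\Phi\in\BV_\tau$, a single application of $L$ regularizes $h\Phi$ to an honest $\BV$ function, after which Rychlik's spectral gap gives $\|hP^n\Phi\|_{\BV}\le C\gamma^{n-1}\|L(h\Phi)\|_{\BV}$, and then $|P^n\Phi|_{L^2(\mu_Y)}\le|h^{-1}|_{L^2(\mu_Y)}|hP^n\Phi|_\infty\le C\gamma^{n-1}|h^{-1}|_{L^2(\mu_Y)}\|h\|_{\BV}\|\Phi\|_{\BV_\tau}$. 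This is genuine exponential decay with no truncation and no summability issue, so $\chi=\sum_{n\ge1}P^n\Phi$ converges in $L^2(\mu_Y)$ and the reverse-martingale CLT applies to $\hat\Phi=\Phi-\chi\circ F+\chi$. Note also that the boundedness of $L:\BV_\tau\to\BV$ consumes the full strength of (F1$_1$) together with $\sum_a\var_a(1/F')\tau(a)<\infty$, i.e.\ (F2$_1$), which is why the hypotheses of Theorem~\ref{thm-CLT} include (A3$_1$); your truncation route uses neither of these weighted summability conditions in any essential way, which is a sign that it cannot close the argument. Finally, to invoke the lifting result you also need the (F)CLT for $\tau-\int\tau\,d\mu_Y$ itself, which follows from the same induced-map proposition since $\tau\in\BV_\tau\cap L^2$.
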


\section{Limit theorems for the induced map}
 \label{sec:ind}

For generality and coherency with notation to be used below, we let $Y=I$ and let 
 $F:Y\to Y$ be a piecewise uniformly
expanding map with respect to a partition $\alpha$ of intervals and \( \mu_Y \) an ergodic, \( F \)-invariant, absolutely continuous probability measure with density \( h=d\mu_Y/dm \). 
Let $\tau:\alpha\to\Z^+$ be a ``weight function'' (which will of course be the return time function in the application of these results to be given below, but for the moment we are not assuming that \( F \) is an induced map) and define
\[
\|\Phi\|_{\BV_\tau}:=\sup_{a\in\alpha}\left\{\frac{{\SMALL\sup_a}(\Phi)+\var_a(\Phi)}{\tau(a)}\right\}.
\]
Then we define the space of functions \( \Phi: Y \to \R^d \) of \emph{ bounded weighted  variation} as 
\[ 
\BV_{\tau}:=\{\Phi: \|\Phi\|_{\BV_\tau}<\infty\}. 
 \] 
In particular, if $\tau\equiv1$, then  $\|\,\|_{\BV_\tau}=\|\,\|_{\BV}$
and $\BV_\tau=\BV$.

\begin{prop} \label{prop-ASIPinduce}
Suppose that (F1$_1$) and (F2\( _{\epsilon} \)) hold  for some \( \epsilon>0 \) . 
Let $p>2$.
Then the vector-valued ASIP holds for the induced map \( F:Y\to Y \) for mean zero observables 
$\Phi\in \BV_\tau\cap L^p(Y,\mu_Y)$.   
\end{prop}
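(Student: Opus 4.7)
The plan is to deduce this from Gou\"ezel's abstract ASIP criterion~\cite{Gouezel10}, which is designed to avoid any assumption on $h^{-1}$ (as emphasized in the introduction). That criterion requires, for the transfer operator $R$ of $F$, (i) a spectral gap on a suitable Banach space and (ii) quantitative estimates for the family of twisted transfer operators $R_t$ near $t=0$, together with $L^p$ control on the observable. For (i), since $\tau\ge1$, assumptions (F1$_1$) and (F2$_\epsilon$) imply (F1$_0$) and (F2$_0$); so $F$ is a Rychlik map and, by Rychlik's theorem, $R$ is quasi-compact on $\BV$ with a simple leading eigenvalue $1$ (eigenvector $h$) and a spectral gap.

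For (ii), since $\Phi\in\BV_\tau$ need not lie in $\BV$, I truncate: set $\Phi_N=\Phi\,\mathbf{1}_{\{\tau\le N\}}$. On each atom $a$ with $\tau(a)\le N$, the definition of $\|\cdot\|_{\BV_\tau}$ gives $\sup_a|\Phi_N|+\var_a(\Phi_N)\le N\|\Phi\|_{\BV_\tau}$. Combining this with (F1$_1$) and (F2$_\epsilon$) to control the jump contributions and internal variations summed over atoms, one checks $\Phi_N\in\BV$ with norm growing at most polynomially in $N$, and likewise $e^{it\cdot\Phi_N}\in\BV$ uniformly for $|t|$ bounded. Standard Keller--Liverani perturbation theory then produces, for $|t|$ small, a perturbed leading eigenvalue $\lambda_N(t)=1-\tfrac12\langle\Sigma_N t,t\rangle+o(|t|^2)$ and a spectral gap for the truncated twisted operator $R_t^{(N)}(v)=R(e^{it\cdot\Phi_N}v)$, with rates that are quantitative in $N$.

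Since $|\Phi|\le\|\Phi\|_{\BV_\tau}\tau$ and $\tau\in L^p$, the tail $\|\Phi-\Phi_N\|_{L^p}\to0$ at a quantitative rate as $N\to\infty$, and the basic inequality $|e^{it\cdot\Phi}-e^{it\cdot\Phi_N}|\le |t|\,|\Phi-\Phi_N|$ converts this into an $L^p$ error bound for $R_t-R_t^{(N)}$ applied to $\BV$ densities. Feeding the spectral perturbation estimates for the truncated operators and the $L^p$ approximation rate into Gou\"ezel's theorem yields the vector-valued ASIP for $\Phi$; tracking the constants through his construction gives the rate $\lambda>p/(4p-4)$ at integrability level $p$.

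The main technical obstacle is the calibration: the truncation level $N=N(n,t)$ (depending on the time $n$ and the frequency $t$) must be chosen so that the polynomial growth of $\|\Phi_N\|_{\BV}$ and $\|e^{it\cdot\Phi_N}\|_{\BV}$ and the $L^p$ decay of $\Phi-\Phi_N$ all balance in order to fit Gou\"ezel's hypotheses. The small extra summability $\epsilon>0$ in (F2$_\epsilon$) (versus the bare (F2$_0$) needed for Rychlik) is precisely what provides the slack to close these estimates.
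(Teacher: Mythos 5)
You have the right tool (Gou\"ezel's spectral ASIP criterion) and the right spectral-gap input (Rychlik on $\BV$), but the heart of the proof is replaced by a scheme that does not match what Gou\"ezel's theorem actually asks for, and is not carried out. Under his hypothesis (I), the only thing to verify about the twisted operators --- beyond the spectral gap for $P_0$ and $\Phi\in L^p$ with $p>2$ --- is \emph{norm continuity of $t\mapsto P_t$ at $t=0$} on the Banach space; no eigenvalue expansion $\lambda_N(t)=1-\tfrac12\langle\Sigma_N t,t\rangle+o(|t|^2)$, no Keller--Liverani argument, and no calibration of a truncation level $N(n,t)$ against the time $n$ is required (the exponent $p/(4p-4)$ then comes directly from his statement). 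Your plan of ``feeding the spectral perturbation estimates for the truncated operators and the $L^p$ approximation rate into Gou\"ezel's theorem'' is not an application of that theorem but an unexecuted attempt to reprove it with a moving truncation; as written there is no argument that the truncation errors can be absorbed into his construction, so this is a genuine gap.

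The missing idea is that no truncation is needed at all: although $\Phi\notin\BV$, the twisted operator $L_tv=L(e^{it\Phi}v)$ is already bounded on $\BV$ and satisfies $\|L_t-L_0\|_{\BV}=\mathcal O(|t|^\epsilon)$ by a direct computation. Indeed $(L_t-L_0)v$ is a sum over branches of $(1/F')(e^{it\Phi}-1)v$, and on each $a\in\alpha$ one has ${\SMALL\sup}_a|\Phi|\le\|\Phi\|_{\BV_\tau}\tau(a)$ and $\var_a(\Phi)\le\|\Phi\|_{\BV_\tau}\tau(a)$; combining $|e^{ix}-1|=\mathcal O(|x|^\epsilon)$ with these bounds, the sup-norm and variation of the sum are controlled by $|t|^\epsilon\|\Phi\|_{\BV_\tau}\|v\|_{\BV}$ times quantities such as $\sum_a{\SMALL\sup}_a(1/|F'|)\tau(a)$ and $\sum_a\var_a(1/F')\tau(a)^\epsilon$, which are finite precisely by (F1$_1$) and (F2$_\epsilon$). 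One then conjugates by $h$ to work on $\mathcal B=h^{-1}\BV$, so that $P_t=h^{-1}L_th$ inherits both the spectral gap and the continuity, and Gou\"ezel's theorem applies verbatim. Your extra summability $\epsilon>0$ is indeed what makes the argument work, but through this direct H\"older-in-$t$ bound on the untruncated operator, not through slack in a truncation calibration.
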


\begin{prop}   \label{prop-CLTinduce}
Suppose (F1$_1$) and (F2\( _1 \)) hold and that $h\in\BV$,  $h^{-1}\in L^1(m)$. 
Then the CLT and functional CLT hold for the induced map $F:Y\to Y$
for mean zero observables $\Phi\in\BV_\tau\cap L^2(Y,\mu_Y)$.
\end{prop}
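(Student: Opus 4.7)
The plan is to use Gordin's martingale approximation method, following Melbourne and Nicol~\cite{MN05} but adapted to the piecewise $\BV_\tau$ setting of the present paper. Let $P:L^1(m)\to L^1(m)$ denote the Perron--Frobenius transfer operator of $F$ with respect to Lebesgue measure, and let $\hat P\psi = P(h\psi)/h$ denote the transfer operator with respect to $\mu_Y$, which is the $L^2(\mu_Y)$-adjoint of the Koopman operator $\psi\mapsto\psi\circ F$. Given $\Phi\in\BV_\tau\cap L^2(\mu_Y)$ of zero mean, I seek a decomposition $\Phi = \chi + g - g\circ F$ with $g\in L^2(\mu_Y)$ and $\hat P\chi=0$. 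Once this is in place, $\chi\circ F^k$ is a reverse martingale difference sequence with respect to the decreasing filtration $F^{-n}\mathcal{B}$; the telescoping identity $\sum_{k=0}^{n-1}\Phi\circ F^k = \sum_{k=0}^{n-1}\chi\circ F^k + g - g\circ F^n$ shows that the coboundary is bounded in $L^2(\mu_Y)$, and the CLT and FCLT for $\Phi$ then follow from the standard reverse martingale CLT and FCLT of Heyde and Billingsley.

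The technical heart is to work with $\Phi h$ and $P$ rather than $\Phi$ and $\hat P$ directly, since $\Phi$ itself is only piecewise $\BV$ whereas $\Phi h$ can be pushed forward into $\BV$. The first step is to show $P(\Phi h)\in\BV$ via the pointwise identity $P(\Phi h) = \sum_{a\in\alpha}((\Phi h)/|F'|)\circ F_a^{-1}\cdot 1_{Fa}$, where $F_a = F|_a$, together with the standard variation bound $\var\bigl(((\Phi h)/|F'|)\circ F_a^{-1}\cdot 1_{Fa}\bigr)\le \var_a((\Phi h)/|F'|) + 2\sup_a(|\Phi h|/|F'|)$ and the inequality $\sup_a|\Phi| + \var_a\Phi \le 2\tau(a)\|\Phi\|_{\BV_\tau}$. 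After a product-rule expansion separating $\Phi$ from $h/|F'|$ and summing over $a\in\alpha$, this bounds $\var(P(\Phi h))$ by a multiple of $\|\Phi\|_{\BV_\tau}\|h\|_{\BV}\sum_{a\in\alpha}\tau(a)\bigl(\sup_a(1/|F'|)+\var_a(1/|F'|)\bigr)$, which is finite precisely by (F1$_1$) and (F2$_1$). Since $\int P(\Phi h)\,dm = \int\Phi\,d\mu_Y = 0$, and since Rychlik's theorem yields a spectral gap for $P$ on $\BV$ (after reducing to the mixing case by passing to a suitable power of $F$ if necessary), the series $\sum_{n\ge 1} P^n(\Phi h)$ converges geometrically in $\BV$. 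Setting $g = (1/h)\sum_{n\ge 1}P^n(\Phi h)$ makes $gh$ an element of $\BV$, hence bounded; a direct calculation using $P(h\cdot\psi\circ F) = h\psi$ gives $\hat P g = g - \hat P\Phi$, so $\chi := \Phi + g - g\circ F$ satisfies $\hat P\chi = 0$ as required.

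The assumption $h^{-1}\in L^1(m)$ enters exactly once, to place $g$ in $L^2(\mu_Y)$: since $gh$ is bounded,
\[
\int g^2\,d\mu_Y = \int (gh)^2\,h^{-1}\,dm \le \|gh\|_\infty^2\,\|h^{-1}\|_{L^1(m)}<\infty.
\]
Without this hypothesis the Gordin coboundary could leave $L^2$ and the method would break down. The main obstacle is therefore not conceptual but bookkeeping, namely deriving the clean variation estimate for $P(\Phi h)$ in the first step with constants sharp enough that the summations close under (F1$_1$) and (F2$_1$); the remaining ingredients (spectral gap on $\BV$, reverse martingale (F)CLT, and the standard coboundary reduction of $\Phi_n/\sqrt n$ to $\sum_{k=0}^{n-1}\chi\circ F^k/\sqrt n$) are routine once $g$ has been constructed.
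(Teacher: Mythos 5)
Your proposal is correct and follows essentially the same route as the paper: show that the Lebesgue transfer operator maps $h\Phi\in\BV_\tau$ into $\BV$ using (F1$_1$) and (F2$_1$), use Rychlik's spectral gap on $\BV$ to sum the iterates, use $h^{-1}\in L^1(m)$ exactly as you do to place the resulting function $g$ in $L^2(\mu_Y)$, and conclude by the reverse martingale (F)CLT applied to $\chi\in\ker\hat P$. (Only a cosmetic slip: you announce the decomposition $\Phi=\chi+g-g\circ F$ but then define $\chi:=\Phi+g-g\circ F$, i.e.\ $\Phi=\chi+(g\circ F-g)$; the latter is the one consistent with $\hat P\chi=0$ and is what you actually use.)
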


We prove these two Propositions in the following two subsections, then in the next section we show how they imply the Theorems above. 
We let $L$ and $P$ denote the transfer operators for $F$ with respect to
Lebesgue measure $m$ and the invariant measure $\mu_Y$ respectively: 
\[
\int_Y Lv\cdot w\,dm = \int_Y v\cdot w\circ F\,dm
\quad \text{ for all }v\in L^1(m), w\in L^\infty(m),\]
and
\[
\int_Y Pv \cdot w\,d\mu_Y = \int_Y v\cdot w\circ F\,d\mu_Y
\quad \text{ 
for all }
v\in L^1(\mu_Y), 
w\in L^\infty(\mu_Y).
\]
We have $Lh=h$, $P1=1$ and $P=h^{-1}Lh$.
Also, $(Lv)(x)= \sum_{Fy=x}v(y)/|F'(y)|$.

\subsection{ASIP for the induced map}
\label{sec-ASIPinduce}

In this subsection, we prove Proposition~\ref{prop-ASIPinduce}. We assume throughout assumptions (F1$_1$) and (F2\( _{\epsilon}) \) of the Proposition for some \( \epsilon>0 \). 
Our goal is to apply a general result of  Gou\"ezel~\cite[Theorem~2.1]{Gouezel10}.
Given $\Phi\in\BV_\tau$, $t\in\R^d$ and \( v\in \BV \), let
\[
L_tv:=L(e^{it\Phi}v)
\quad \text{ and } 
\quad 
P_tv:=P(e^{it\Phi}v).
\] 
Here $t\Phi$ is shorthand for $t\cdot\Phi$.
Note that $P_t=h^{-1}L_th$. We also define the Banach space $\mathcal{B}=h^{-1}\BV$ with norm $\|v\|_{\mathcal{B}}=\|hv\|_{\BV}$. 

\begin{lemma}  \label{lem-SG}
There exist constants $C>0$, $\gamma\in(0,1)$ such that
\begin{itemize}
\item[(a)] 
$\|L^nv-h\int v\,dm\|_{\BV}\le C\gamma^n\|v\|_{\BV}$
for all $n\ge1$, $v\in\BV$.
\item[(b)] $\|L_t-L_0\|_{\BV}\to0$ as $t\to0$.
\end{itemize}
\end{lemma}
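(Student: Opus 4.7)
The plan is straightforward: Part~(a) is an instance of Rychlik's quasi-compactness theorem, while Part~(b) requires carefully exploiting the weighted variation bound $\Phi\in\BV_\tau$ together with the summability conditions $(F1_1)$ and $(F2_\epsilon)$. For~(a), I would first observe that $(F1_1)$ and $(F2_\epsilon)$ imply $(F1_0)$ and $(F2_0)$, i.e., Rychlik's conditions. Rychlik's theorem then yields quasi-compactness of $L$ on $\BV$ with $1$ as an eigenvalue and $h$ as eigenfunction. Ergodicity of $\mu_Y$ makes the eigenvalue $1$ simple; after the standard reduction to a mixing basic set of the spectral decomposition, the remaining peripheral spectrum is removed, yielding $L^n=P_0+Q^n$ with $P_0v=h\int v\,dm$ and $\|Q^n\|_{\BV}\le C\gamma^n$ for some $\gamma\in(0,1)$. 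This is precisely statement~(a).

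For~(b), I write $(L_t-L_0)v=L((e^{it\Phi}-1)v)$, set $w=(e^{it\Phi}-1)v$, and apply the standard Lasota--Yorke-type bound for the transfer operator of a piecewise monotone expanding map,
\[
\|Lw\|_{\BV} \le C\sum_{a\in\alpha}\Bigl\{{\SMALL\sup_a}|w|\bigl({\SMALL\sup_a}(1/|F'|)+\var_a(1/|F'|)\bigr)+{\SMALL\sup_a}(1/|F'|)\var_a(w)\Bigr\} + |w|_{L^1}.
\]
Using the product rule for variation together with $\Phi\in\BV_\tau$, the key pointwise bounds are
\[
{\SMALL\sup_a}|e^{it\Phi}-1|\le \min\{2,\,|t|\|\Phi\|_{\BV_\tau}\tau(a)\},\qquad \var_a(e^{it\Phi})\le |t|\|\Phi\|_{\BV_\tau}\tau(a),
\]
so that both $\sup_a|w|$ and $\var_a(w)$ are controlled by $\|v\|_{\BV}\|\Phi\|_{\BV_\tau}$ multiplied by one of these factors.

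The contribution involving $|t|\tau(a)$ paired with $\sup_a(1/|F'|)$ sums to an $O(|t|)$ bound directly by $(F1_1)$. For the contributions involving $\min\{2,|t|\tau(a)\}$, I would interpolate as $\min\{2,|t|\tau(a)\}\le 2^{1-\epsilon}(|t|\tau(a))^\epsilon$; paired with $\var_a(1/|F'|)$ this sums to $O(|t|^\epsilon)$ by $(F2_\epsilon)$, while paired with $\sup_a(1/|F'|)$ it is handled by $(F1_\epsilon)$, which follows from $(F1_1)$ since $\tau\ge 1$. The remaining $|w|_{L^1}$ term is bounded by $\|v\|_{\BV}\int|e^{it\Phi}-1|\,dm$, which tends to $0$ as $t\to 0$ by dominated convergence. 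Combining these estimates gives $\|L_t-L_0\|_{\BV\to\BV}=O(|t|^\epsilon)\to 0$, which is stronger than needed.

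The main obstacle, and the reason for introducing the weighted space $\BV_\tau$ rather than working in $\BV$, is that $\Phi$ typically has variation of order $\tau(a)$ on each partition element $a$ (since the induced observable sums over $\tau(a)$ iterates of $\phi$), so that a naive operator-norm estimate of $L_t-L_0$ diverges termwise. The strengthening of the hypothesis from $(F1_0)$ (Rychlik integrability) to $(F1_1)$, which is already needed for $\tau\in L^1$ and hence the existence of $\mu$, is exactly what tames this $\tau$-dependence in the continuity estimate.
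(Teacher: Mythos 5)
Your proposal is correct and follows essentially the same route as the paper: part~(a) is quoted from Rychlik under $(F1_0)$, $(F2_0)$, and part~(b) writes $(L_t-L_0)v=L((e^{it\Phi}-1)v)$, applies the product rule for variation with the bounds $\sup_a|e^{it\Phi}-1|=\mathcal O(|t|^\epsilon\|\Phi\|_{\BV_\tau}^\epsilon\tau(a)^\epsilon)$ and $\var_a(e^{it\Phi})\le|t|\|\Phi\|_{\BV_\tau}\tau(a)$, and sums using $(F1_1)$ and $(F2_\epsilon)$ to get $\|L_t-L_0\|_{\BV}=\mathcal O(|t|^\epsilon)$. The only cosmetic difference is your explicit $|w|_{L^1}$ term (handled by dominated convergence) where the paper absorbs everything into the $\sum_a\sup_a$ terms; both are fine.
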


\begin{proof}  Part (a) is in~\cite{Rychlik83} (using (F1$_0$) and (F2$_0$)).
Next, write
\begin{align*}
|(L_t-L_0)v(x)|&=|L(\{e^{it\Phi}-1\}v)(x)|\le \sum_{Fy=x}(1/|F'(y)|)|e^{it\Phi(y)}-1||v(y)| \\
&\le \sum_{a\in\alpha}{\SMALL\sup}_a|(1/F')(e^{it\Phi}-1)v|.
\end{align*}
Since $|e^{ix}-1|=\mathcal O(x^\epsilon)$ for any $\epsilon\in[0,1]$,
\begin{align*}
|(L_t-L_0)v|_\infty  & \le \sum_a {\SMALL\sup}_a(1/|F'|)|t|^\epsilon{\SMALL\sup}_a|\Phi|^\epsilon{\SMALL\sup}_a(v) \\ &
\le |t|^\epsilon (\|\Phi\|_{\BV_\tau})^\epsilon|v|_\infty \sum_{a\in\alpha}{\SMALL\sup}_a(1/|F'|)\tau(a)^\epsilon.
\end{align*}
Further,
\[
\var((L_t-L_0)v)\le 
\sum_{a\in\alpha}\var_a((1/F')(e^{it\Phi}-1)v)+2
\sum_{a\in\alpha}{\SMALL\sup}_a|(1/F')(e^{it\Phi}-1)v|.
\]
The second term is estimated as above.
Since $\var_a(e^{i\Phi}-1)\le \var_a(\Phi)$, 
\begin{align*}
& \var_a((1/F')(e^{it\Phi}-1)v) \le
\var_a(1/F'){\SMALL\sup}_a|(e^{it\Phi}-1)v| \\
&\qquad\qquad+
\var_a(v){\SMALL\sup}_a|(1/F')(e^{it\Phi}-1)|+
\var_a(e^{it\Phi}-1){\SMALL\sup}_a|(1/F')v| \\
& \qquad \le |t|^\epsilon\var_a(1/F'){\SMALL\sup}_a|\Phi|^\epsilon|v|_\infty+
|t|^\epsilon{\SMALL\sup}_a(1/|F'|){\SMALL\sup}_a|\Phi|^\epsilon\var(v) \\
&\qquad\qquad\qquad \qquad\qquad\qquad +
|t|\var_a(\Phi){\SMALL\sup}_a(1/|F'|)|v|_\infty \\
& \qquad \le |t|^\epsilon \|\Phi\|_{\BV_\tau}\|v\|_{\BV}\{
\var_a(1/F')\tau(a)^\epsilon + {\SMALL\sup}_a(1/|F'|)\tau(a)\}.
\end{align*}
By conditions (F1) and (F2\( _{\epsilon} \)), $\|L_t-L_0\|_{\BV}=\mathcal O(|t|^\epsilon)$ proving part (b).
\end{proof}

\begin{cor}  
There exist constants $C>0$, $\gamma\in(0,1)$ such that
\begin{itemize}
\item[(a)] 
$\|P^nv-\int v\,d\mu_Y\|_{\mathcal{B}}\le C\gamma^n\|v\|_{\mathcal{B}}$
for all $n\ge1$, $v\in\mathcal{B}$.
\item[(b)] $\|P_t-P_0\|_{\mathcal{B}}\to0$ as $t\to0$.
\end{itemize}
\end{cor}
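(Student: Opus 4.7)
The plan is to transfer Lemma~\ref{lem-SG} from the $L$-operator on $\BV$ directly to the $P$-operator on $\mathcal{B}$ by exploiting the conjugacy $P = h^{-1}Lh$ and the definition $\|v\|_{\mathcal{B}} = \|hv\|_{\BV}$. In effect, $\mathcal B$ is isometrically isomorphic to $\BV$ via multiplication by $h$, and this isomorphism intertwines $P$ with $L$ and $P_t$ with $L_t$, so the two statements should fall out immediately.

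For part (a), the idea is to set $w = hv \in \BV$ with $\|w\|_{\BV} = \|v\|_{\mathcal{B}}$ and iterate the relation $P = h^{-1}Lh$ to get $P^n v = h^{-1}L^n w$. I would then rewrite the mean in terms of $m$ as $\int v\, d\mu_Y = \int hv\, dm = \int w\, dm$, so that
\[
P^n v - \int_Y v\,d\mu_Y \;=\; h^{-1}\Bigl(L^n w - h\int_Y w\, dm\Bigr).
\]
Multiplying both sides by $h$ and taking $\BV$-norms gives exactly the right-hand side of Lemma~\ref{lem-SG}(a) applied to $w$, which produces the bound $C\gamma^n\|v\|_{\mathcal{B}}$.

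For part (b), I would use the same trick: $(P_t - P_0)v = h^{-1}(L_t - L_0)(hv)$, so
\[
\|(P_t - P_0)v\|_{\mathcal{B}} = \|(L_t - L_0)(hv)\|_{\BV} \le \|L_t - L_0\|_{\BV}\,\|v\|_{\mathcal{B}},
\]
whence $\|P_t - P_0\|_{\mathcal{B}} \le \|L_t - L_0\|_{\BV} \to 0$ by Lemma~\ref{lem-SG}(b).

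There is essentially no obstacle here: everything reduces to verifying the conjugacy identities and the identification of the invariant integral in the two measures. The only minor point to be careful about is that $\mathcal B = h^{-1}\BV$ is well defined as a Banach space with the stated norm (it is, since $h \in \BV$ and the norm is defined through $hv$, avoiding any need to divide by $h$), so that the passage $v \leftrightarrow hv$ is a genuine isometry between $\mathcal B$ and $\BV$ under which the corollary and the lemma become the same statement.
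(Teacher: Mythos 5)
Your proposal is correct and is essentially identical to the paper's own proof: both exploit the conjugacy $P=h^{-1}Lh$ (hence $P_t=h^{-1}L_th$) together with the isometry $v\mapsto hv$ from $\mathcal{B}$ to $\BV$, and the identity $\int_Y v\,d\mu_Y=\int_Y hv\,dm$, to reduce both parts directly to Lemma~\ref{lem-SG}. Nothing further is needed.
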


\begin{proof}
From Lemma~\ref{lem-SG} we have 
\begin{align*}
\|P^nv-{\SMALL\int} v\,d\mu_Y\|_{\mathcal{B}}
& =\|h(P^nv-{\SMALL\int} hv\,dm)\|_{\BV}
=\|L^n(hv)-h{\SMALL\int} (hv)\,dm)\|_{\BV}
\\ & \le C\gamma^n\|hv\|_{\BV}=C\gamma^n\|v\|_{\mathcal{B}},
\end{align*}
and
\begin{align*}
\|(P_t-P_0)v\|_{\mathcal{B}} & 
= \|h(P_t-P_0)v\|_{\BV}
= \|(L_t-L_0)(hv)\|_{\BV}
\\ & \le  \|(L_t-L_0)\|_{\BV}\|hv\|_{\BV}
=  \|(L_t-L_0)\|_{\BV}\|v\|_{\mathcal{B}}
\end{align*}
so that $\|P_t-P_0\|_{\mathcal{B}}\le \|L_t-L_0\|_{\BV}\to0$ as $t\to0$.
\end{proof}

\begin{pfof}{Proposition~\ref{prop-ASIPinduce}}
We have verified the hypotheses (I) of
Gou\"ezel~\cite[Theorem~2.1]{Gouezel10}, so the result follows.
\end{pfof}

\begin{rmk}  From the proof it follows that condition (F1$_1$) can be replaced by the assumptions that (F1$_\epsilon$) holds for some $\epsilon>0$
and $\sum_{a\in\alpha}{\SMALL\sup}_a(1/|F'|)\tau(a)\var_a(\phi)<\infty$. 
\end{rmk}

\subsection{CLT for the induced map}
\label{sec-CLTinduce}

In this subsection, we prove Proposition~\ref{prop-CLTinduce} following
the approach in~\cite{MN05}.
We assume throughout the assumptions of the Proposition. 

\begin{lemma} \label{lem-tau}
The operator $L:\BV_\tau\to\BV$ is bounded.
\end{lemma}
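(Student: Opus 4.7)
The plan is to unpack the transfer operator expression $(Lv)(x) = \sum_{a\in\alpha} (v/|F'|)(F|_a^{-1}x)\,\mathbf{1}_{Fa}(x)$ and bound the sup norm and the variation of $Lv$ separately, exploiting that for $v\in\BV_\tau$, the key quantities $\sup_a v$ and $\var_a v$ are each controlled by $\|v\|_{\BV_\tau}\tau(a)$. Throughout I will use the standard fact that $\var_a(gh)\le\var_a(g)\sup_a|h|+\sup_a|g|\var_a(h)$ and the boundary estimate $\var(\sum_a u_a\mathbf{1}_{Fa})\le \sum_a[\var_a(u_a)+2\sup_a|u_a|]$ for a piecewise-defined function on a partition of intervals.

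For the sup norm, I would estimate
\[
|Lv|_\infty\le \sum_{a\in\alpha}{\SMALL\sup}_a(|v|/|F'|)\le \sum_{a\in\alpha}{\SMALL\sup}_a(1/|F'|){\SMALL\sup}_a|v|\le \|v\|_{\BV_\tau}\sum_{a\in\alpha}{\SMALL\sup}_a(1/|F'|)\tau(a),
\]
which is finite by (F1$_1$). Here I use $\sup_a|v|\le \sup_a(\Phi)+\var_a(\Phi)\le \|v\|_{\BV_\tau}\tau(a)$ (the norm in $\BV_\tau$ controls the sup on each piece by $\tau(a)$).

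For the total variation, I would apply the boundary estimate to write
\[
\var(Lv)\le \sum_{a\in\alpha}\var_a(v/|F'|)+2\sum_{a\in\alpha}{\SMALL\sup}_a(|v|/|F'|),
\]
and then expand the first sum using the product rule for variation:
\[
\var_a(v/|F'|)\le \var_a(v){\SMALL\sup}_a(1/|F'|)+{\SMALL\sup}_a|v|\cdot\var_a(1/|F'|)\le \|v\|_{\BV_\tau}\tau(a)\bigl[{\SMALL\sup}_a(1/|F'|)+\var_a(1/|F'|)\bigr].
\]
Summing, the variation of $Lv$ is bounded by $\|v\|_{\BV_\tau}$ times $\sum_a\tau(a){\SMALL\sup}_a(1/|F'|)$ plus $\sum_a\tau(a)\var_a(1/|F'|)$, both of which are finite by (F1$_1$) and (F2$_1$) respectively. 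Combining the two estimates yields $\|Lv\|_{\BV}\le C\|v\|_{\BV_\tau}$, proving boundedness. There is no serious obstacle; the only thing to keep track of is that the factors of $\tau(a)$ produced by the $\BV_\tau$ norm are absorbed precisely by the weighted summability conditions (F1$_1$) and (F2$_1$).
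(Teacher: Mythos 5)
Your proposal is correct and follows essentially the same route as the paper's proof: the same splitting of $|L\Phi|_\infty$ and $\var(L\Phi)$ over partition elements, the same product rule for $\var_a(\Phi/F')$, and the same absorption of the factors $\tau(a)$ by (F1$_1$) and (F2$_1$). No gaps.
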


\begin{proof}
Let $\Phi\in \BV_\tau$.  Then, using the definition of \( \|\cdot \|_{\BV_{\tau}} \), we have 
\[
|L\Phi(x)|\le \sum_{Fy=x}|\Phi(y)|/|F'(y)|
\le \|\Phi\|_{\BV_\tau} \sum_{a\in\alpha}{\SMALL\sup}_a(1/|F'|)\tau(a),
\]
so $|L\Phi|_\infty\le \{\sum_{a\in\alpha}{\SMALL\sup}_a(1/|F'|)\tau(a)\}\|\Phi\|_{\BV_\tau}$ and \( \sum_{a\in\alpha}{\SMALL\sup}_a(1/|F'|)\tau(a)< \infty \) by assumption (F1$_1$). 
Next, we note that
\[
\var(L\Phi)\le \sum_{a\in\alpha}\var_a(\Phi/F')+
2\sum_{a\in\alpha}{\SMALL\sup}_a|\Phi/F'|.
\]
The second sum is estimated as above using (F1$_1$).  For the first sum, we have
\begin{align*}
\var_a(\Phi/F') & \le \var_a(\Phi){\SMALL\sup}_a(1/|F'|)+ {\SMALL\sup}_a|\Phi|\var_a(1/F') 
\\ & \le 
\|\Phi\|_{\BV_\tau}\{{\SMALL\sup}_a(1/|F'|)+
\var_a(1/F') \}\tau(a).
\end{align*}
Hence $\var(L\Phi)\le \{
\sum_{a\in\alpha}\var_a(1/|F'|)\tau(a)+
4\sum_{a\in\alpha}{\SMALL\sup}_a(1/|F'|)\tau(a)
\}\|\Phi\|_{\BV_\tau}$ and \( \sum_{a\in\alpha}\var_a(1/|F'|)\tau(a)+
4\sum_{a\in\alpha}{\SMALL\sup}_a(1/|F'|)\tau(a)< \infty \) by (F1$_1$) and (F2\( _1 \)).
\end{proof}

\begin{cor}   \label{cor-tau} 
There exist $C>0$, $\gamma\in(0,1)$ such that $|P^n\Phi|_{L^2(Y,\mu_Y)}\le C\gamma^n|h^{-1}|_2\|h\|_{\BV}\|\Phi\|_{\BV_\tau}$ for all $n\ge1$. In particular $\chi:=\sum_{n=1}^\infty P^n\Phi \in L^2(Y,\mu_Y)$.
\end{cor}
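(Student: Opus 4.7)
The plan is to exploit the conjugation $P = h^{-1}Lh$, which gives $P^n\Phi = h^{-1}L^n(h\Phi)$, and then combine the exponential decay of $L$ on $\BV$ (Lemma~\ref{lem-SG}(a)) with the boundedness of $L:\BV_\tau\to\BV$ (Lemma~\ref{lem-tau}). The price for working with $P^n\Phi$ rather than $L^n(h\Phi)$ is the appearance of $h^{-1}$, which is absorbed by the hypothesis $h^{-1}\in L^1(m)$ at the very end.

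First I would check that multiplication by $h$ sends $\BV_\tau$ boundedly into itself with $\|h\Phi\|_{\BV_\tau}\le C\|h\|_{\BV}\|\Phi\|_{\BV_\tau}$. This is a direct product-rule computation on each $a\in\alpha$: one bounds $\sup_a(h\Phi)\le |h|_\infty \sup_a \Phi$ and $\var_a(h\Phi)\le |h|_\infty \var_a \Phi + \sup_a \Phi\cdot\var_a h$, then divides by $\tau(a)$ using $\sup_a \Phi\le \tau(a)\|\Phi\|_{\BV_\tau}$, noting that both $|h|_\infty$ and $\var_a h$ are dominated by $\|h\|_{\BV}$. Lemma~\ref{lem-tau} then gives $\|L(h\Phi)\|_{\BV}\le C\|h\|_{\BV}\|\Phi\|_{\BV_\tau}$. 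The crucial observation is that $L(h\Phi)$ has zero Lebesgue integral: since $\Phi$ has mean zero with respect to $\mu_Y$,
\[
\int L(h\Phi)\,dm = \int h\Phi\,dm = \int \Phi\,d\mu_Y = 0.
\]
Applying Lemma~\ref{lem-SG}(a) to $v=L(h\Phi)$ then yields
\[
\|L^n(h\Phi)\|_{\BV} = \|L^{n-1}(L(h\Phi))\|_{\BV} \le C\gamma^{n-1}\|L(h\Phi)\|_{\BV} \le C'\gamma^n\|h\|_{\BV}\|\Phi\|_{\BV_\tau}.
\]

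To finish, I would compute
\[
|P^n\Phi|_{L^2(\mu_Y)}^2 = \int h^{-2}|L^n(h\Phi)|^2\,h\,dm = \int h^{-1}|L^n(h\Phi)|^2\,dm \le |L^n(h\Phi)|_\infty^2\,|h^{-1}|_{L^1(m)},
\]
and use $|L^n(h\Phi)|_\infty\le \|L^n(h\Phi)\|_{\BV}$ to obtain the asserted geometric bound; here the factor $|h^{-1}|_2$ in the statement is the $L^2(\mu_Y)$ norm of $h^{-1}$, which equals $|h^{-1}|_{L^1(m)}^{1/2}$ and is therefore finite by hypothesis. The geometric series $\sum_n \gamma^n<\infty$ then gives $\chi=\sum_{n\ge1}P^n\Phi\in L^2(Y,\mu_Y)$.

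I do not anticipate any real obstacle: the whole argument is structured by the conjugation trick and the two prior lemmas. The only step that requires any care is the product-rule verification that $h\cdot\BV_\tau\subset \BV_\tau$, and this is entirely routine.
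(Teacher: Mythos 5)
Your proposal is correct and follows essentially the same route as the paper: conjugate to get $P^n\Phi=h^{-1}L^n(h\Phi)$, use $h\in\BV$ and Lemma~\ref{lem-tau} to put $L(h\Phi)$ in $\BV$, apply Lemma~\ref{lem-SG}(a), and absorb $h^{-1}$ via $|h^{-1}|_{L^2(\mu_Y)}^2=|h^{-1}|_{L^1(m)}$. Your explicit check that $\int L(h\Phi)\,dm=\int\Phi\,d\mu_Y=0$ (needed so that Lemma~\ref{lem-SG}(a) gives pure decay) is a point the paper leaves implicit, but otherwise the arguments coincide.
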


\begin{proof}     
We use the fact that $h^{-1}\in L^1(m)$, or equivalently that  $h^{-1}\in L^2(Y,\mu_Y)$ (since 
\( d\mu_Y= h \,dm  \) we have   
\( \int (h^{-1})^2 d\mu = \int (h^{-1})^2 h \,dm  = \int h^{-1} \,dm \)).
   Thus,  writing $|\;|_2=|\;|_{L^2(Y,\mu_Y)}$ and 
 \(P^n\Phi  =  h^{-1} h P^n\Phi  \) we have 
\[
|P^n\Phi|_2\le |h^{-1}|_2|hP^n\Phi|_\infty\le |h^{-1}|_2\|hP^n\Phi\|_{\BV}
=|h^{-1}|_2\| L^{n-1}L(h\Phi)\|_{\BV}.
 \] 
Since $h\in\BV$, it follows 
that $h\Phi\in \BV_\tau$ and, by Lemma~\ref{lem-tau}, that $L(h\Phi)\in \BV$. 
Hence by Lemma~\ref{lem-SG}(a),
\[ 
|P^n\Phi|_2 \le C\gamma^{n-1}|h^{-1}|_2 \|L(h\Phi)\|_{\BV}.
 \] 
Finally, by Lemma~\ref{lem-tau},
$\|L(h\Phi)\|_{\BV}\le C \|h\Phi\|_{\BV_\tau}\le C\|h\|_{\BV}\|\Phi\|_{\BV_\tau}$
which gives the result. 
\end{proof}

\begin{pfof}{Proposition~\ref{prop-CLTinduce}}
The proof follows from Corollary~\ref{cor-tau} by 
a standard Gordin-type argument~\cite{Gordin69}. We sketch the steps of the argument.

Write $\Phi=\hat\Phi + \chi\circ F-\chi$.   Then 
it suffices to prove the CLT for 
$\hat\Phi$.   Now $\hat\Phi\in L^2(Y,\mu_Y)$ (since $\Phi,\chi\in L^2(Y,\mu_Y)$)
and it follows from the 
definitions that $\hat\Phi\in\ker P$.
This means that 
$\{\hat\Phi\circ F^n\}$ defines a sequence of reverse martingale increments.
Passing to the natural extension and applying the (F)CLT for ergodic $L^2$ 
martingales we obtain the required limit laws as $n\to-\infty$.
Since the limit laws are distributional, this is equivalent to the result as 
$n\to\infty$.
\end{pfof}

\section{Limit theorems for the original map}
\label{sec-orig}

We are now ready to complete the proof of Theorems~\ref{thm-ASIP}
and~\ref{thm-CLT}.
Starting with $F:Y\to Y$  and $\tau:Y\to{\mathbb N}$, 
we build a tower map $\tilde f:X\to X$ for which $F:Y\to Y$ is 
a {\em first} return map.
Define 
\[
X=\{(y,\ell)\in Y\times\Z: 0\le \ell\le \tau(y)-1\}
\]
and 
 \[
 \tilde f(y,\ell)=\begin{cases} (y,\ell+1), & \ell\le \tau(y)-2 \\
(Fy,0), & \ell=\tau(y)-1 \end{cases}.
\]
    Then $Y$ is identified with the base
$\{(y,0):y\in Y\}$ of the tower and $F=\tilde f^\tau$ is the first return map to $Y$.
Let $\mu_Y=\hat\mu$ denote the absolutely continuous $F$-invariant probability measure on $Y$.
Since $\tau$ is integrable, we can define a $\tilde f$-invariant
probability  measure $\mu_X=\mu_Y\times\nu/\int\tau\,d\mu_Y$ on $X$
where $\nu$ denotes counting measure.

Also, we have a natural projection $\pi:X\to I$ given by
$\pi(y,\ell)=f^\ell y$.   
This is a semiconjugacy: $\pi\circ \tilde f=f\circ\pi$.
The pushforward measure $\mu=\pi_*\mu$ is the absolutely
continuous $f$-invariant probability measure on $I$ constructed in~\cite{ALV09}.

Now suppose that $\phi:X\to\R^d$ is a mean zero $\BV$ observable as in
Theorem~\ref{thm-ASIP2} or Theorem~\ref{thm-CLT2}.  This lifts to 
an observable $\tilde\phi=\phi\circ\pi:X\to\R^d$ on the tower.
We can then define the
induced observable $\Phi:Y\to\R^d$ given by
\[
\Phi(y)=\sum_{\ell=0}^{\tau(y)-1}\tilde\phi(y,\ell)
=\sum_{\ell=0}^{\tau(y)-1}\phi(f^\ell y).
\]

The following abstract result shows that limit theorems
for the induced map are inherited by the original map.

\begin{prop} \label{prop-lift}
Suppose that $f:I\to I$ is ergodic and that
$\phi\in L^\infty(I)$ with $\int_I\phi\,d\mu=0$. 
\begin{itemize}
\item[(a)]
If $\tau\in L^p(Y,\mu_Y)$ for some $p>2$ and  $\Phi$ and $\tau-\int_Y\tau\,d\mu_Y$ satisfy the ASIP for $(Y,\mu_Y,F)$, then
$\phi$ satisfies the ASIP for $(I,\mu,f)$.
\item[(b)]
If 
$\tau\in L^2(Y,\mu_Y)$ and $\Phi$ and $\tau-\int_Y\tau\,d\mu_Y$ satisfy the (F)CLT for $(Y,\mu_Y,F)$, then
$\phi$ satisfies the (F)CLT for $(I,\mu,f)$.
\end{itemize}
\end{prop}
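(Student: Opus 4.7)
The strategy is to first transfer the limit laws from the induced map $F$ on $Y$ to the tower map $\tilde f$ on $X$, and then descend to the original map via the measure-preserving semiconjugacy $\pi:X\to I$. Since $\pi\circ\tilde f=f\circ\pi$, $\pi_*\mu_X=\mu$, and $\tilde\phi=\phi\circ\pi$, the partial sums satisfy $\tilde\phi_n=\phi_n\circ\pi$ pointwise, so any limit law for $\tilde\phi$ on $(X,\mu_X,\tilde f)$ passes directly to $\phi$ on $(I,\mu,f)$. This reduces matters entirely to work on the tower.

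On the tower it suffices to analyse orbits starting in the base $Y$, since any other starting point $(y,\ell)\in X$ enters $Y$ after $\tau(y)-\ell$ further iterations, and this a.s.\ finite shift is absorbed in all asymptotic statements. For $y\in Y$, let $N_n(y)=\max\{N\ge 0:\tau_N(y)\le n\}$ be the number of complete returns by time $n$, where $\tau_N=\sum_{j=0}^{N-1}\tau\circ F^j$. Since $F$ is the first return of $\tilde f$ to $Y$, one has the fundamental decomposition
\[
\tilde\phi_n(y)=\Phi_{N_n(y)}(y)+R_n(y),\qquad |R_n(y)|\le |\phi|_\infty\,\tau(F^{N_n(y)}y).
\]
Writing $\bar\tau:=\int_Y\tau\,d\mu_Y=1/\mu_X(Y)$ (Kac's formula), the hypothesis on $\tau-\bar\tau$ gives $N_n=n/\bar\tau+\text{(error)}$, and an Anscombe-type substitution replaces the random index $N_n$ by the deterministic time $[n/\bar\tau]$ in the limit theorem for $\Phi_{N_n}$, while $R_n$ is controlled by the size of a single return time.

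More precisely, for (a) the joint ASIP for $(\Phi,\tau-\bar\tau)$ on $(Y,\mu_Y,F)$ yields a Brownian approximation for $\Phi_{N_n}$ at a linearly rescaled deterministic time with error of order $n^\lambda$; and $\tau\in L^p$ with $p>2$ gives $\tau(F^ky)=o(k^{1/p+\varepsilon})$ a.s.\ by Borel--Cantelli applied to $\sum_k\mu_Y(\tau^p>k)<\infty$, so the remainder $R_n$ is absorbed into the ASIP error. For (b), Birkhoff gives $N_n/n\to1/\bar\tau$ a.s.\ and the CLT/FCLT for $\tau$ yields tightness of $(N_n-n/\bar\tau)/\sqrt n$; the Anscombe FCLT then propagates the limit to $\Phi_{N_n}/\sqrt n$, while $|R_n|/\sqrt n\to 0$ in probability because $\mu_Y(\tau>\varepsilon\sqrt n)\le \varepsilon^{-2}\int\tau^2\,d\mu_Y/n\to 0$.

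The main technical obstacle is the Anscombe-type substitution: replacing the random stopping time $N_n$ by the deterministic time $n/\bar\tau$ in the ASIP or FCLT without losing rate. This is precisely why we require the limit law for $\tau-\bar\tau$ in addition to that for $\Phi$, since controlling the joint fluctuation of $(\Phi_N,\tau_N-N\bar\tau)$ is essential—the marginals alone do not suffice. The detailed execution of this passage is carried out in Gou\"ezel~\cite{Gouezel07} for the CLT and in Melbourne--Nicol~\cite{MN09} and Melbourne--Zweim\"uller~\cite{MZprep} for the ASIP and FCLT, and it is these results that are invoked here.
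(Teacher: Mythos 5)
Your proposal follows the same route as the paper: reduce to the tower map via the measure-preserving semiconjugacy $\pi$, lift the limit laws from the first return map to the tower by the Anscombe-type random-index argument (delegated, as in the paper, to \cite{DenkerPhilipp84,MT04}, \cite{Gouezel07} and \cite{MZprep}), and push down to $(I,\mu,f)$. The extra detail you give on the decomposition $\tilde\phi_n=\Phi_{N_n}+R_n$ and the control of the remainder is a correct sketch of what those cited results do internally.
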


\begin{proof}   
The ASIP for $(Y,\mu_Y,F,\Phi)$ lifts by~\cite{DenkerPhilipp84,MT04} to
an ASIP for $(X,\mu_X,\tilde f,\tilde\phi)$.
The CLT lifts by~\cite[Theorem~A.1]{Gouezel07} (cf.~\cite[Theorem~1.1]{MT04}).  
The functional CLT lifts by~\cite{MZprep}.
It follows from the definition of $\pi$, $\tilde \phi$ and $\mu$ that the limit
laws for $(X,\mu_X,\tilde f,\tilde\phi)$ push down to the required limit laws 
for $(I,\mu,f,\phi)$.
\end{proof}

\begin{pfof}{Theorems~\ref{thm-ASIP2} and~\ref{thm-CLT2}}
The assumptions of Theorem~\ref{thm-ASIP2}
guarantee that $\tau\in L^p(Y,m)$ for some $p>2$.
Since $h\in\BV$, it follows that $\tau\in L^p(Y,\mu_Y)$.
Since $\phi\in\BV$, it follows from the definition of $\Phi$ that
$\Phi\in \BV_\tau\cap L^2(Y,\mu_Y)$.
By Proposition~\ref{prop-ASIPinduce}, $\Phi$ and $\tau$ satisfy the ASIP
for $(Y,\mu_Y,F)$.
Apply Proposition~\ref{prop-lift}(a) to obtain the ASIP for $\phi$.

The argument for Theorem~\ref{thm-CLT2} is identical.
\end{pfof}

\section{Decay of correlations}
\label{sec-decay}

In this section, we prove Theorems~\ref{thm-expdecay} and~\ref{thm-decay}.    By assumption $f:I\to I$
is mixing.   To begin with we make the simplifying assumption
that the tower map $\tilde f:X\to X$ constructed in Section~\ref{sec-orig} is mixing.  
This simplifying assumption is relaxed at the end of the section.

Let $\mathcal{B}_1(Y)$ denote the space of uniformly piecewise bounded
variation observables, namely those $v:Y\to\R$ such that
$\|v\|_{\mathcal{B}_1(Y)}=\sup_{a\in\alpha}\|1_a v\|_{\BV}<\infty$.
Take $\mathcal{B}(Y)=h^{-1}\mathcal{B}_1(Y)$
(with norm $\|v\|_{\mathcal{B}(Y)}=\|hv\|_{\mathcal{B}_1(Y)}$).  

\begin{prop} \label{prop-B}
\begin{itemize}
\item[(a)]
$\BV(Y)\subset\mathcal{B}_1(Y)\subset\mathcal{B}(Y)\subset L^1(Y,\mu_Y)$.
\item[(b)] The unit ball in $\mathcal{B}_1(Y)$ is compact in $L^1(Y,\mu_Y)$.
\end{itemize}
\end{prop}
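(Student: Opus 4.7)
My plan is to dispatch the three inclusions of (a) directly from the definitions, then handle (b) by applying Helly's selection theorem on each partition element and extracting diagonally, assembling global $L^1(\mu_Y)$ convergence via dominated convergence.

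For $\BV(Y)\subset\mathcal{B}_1(Y)$, I would observe that for $v\in\BV$ and $a\in\alpha$ we have $|1_a v|_\infty\le |v|_\infty$ and $\var(1_a v)\le \var(v)+2|v|_\infty$ (the extra term controlling the jumps at $\partial a$), so $\|1_a v\|_{\BV}\le 3\|v\|_{\BV}$ uniformly in $a$. The inclusion $\mathcal{B}_1(Y)\subset\mathcal{B}(Y)$ amounts to showing that $hv\in\mathcal{B}_1(Y)$ whenever $v\in\mathcal{B}_1(Y)$: since $h\in\BV$ and $\BV$ on each interval is a Banach algebra, the standard estimate $\|1_a(hv)\|_{\BV}\le\|1_a h\|_{\BV}\|1_a v\|_{\BV}$ combined with the first inclusion yields a uniform bound $\|hv\|_{\mathcal{B}_1}\le C\|h\|_{\BV}\|v\|_{\mathcal{B}_1}$. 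For the third inclusion, note that $\mathcal{B}_1(Y)\subset L^\infty(Y)$ since $|v|_\infty=\sup_{a\in\alpha}\sup_a|v|\le \|v\|_{\mathcal{B}_1}$; hence $v\in\mathcal{B}(Y)$ gives $hv\in L^\infty$, and therefore $\int|v|\,d\mu_Y=\int|hv|\,dm\le|hv|_\infty<\infty$.

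For (b), let $\{v_n\}$ lie in the unit ball of $\mathcal{B}_1(Y)$, so $\|1_a v_n\|_{\BV}\le1$ for every $a\in\alpha$ and every $n$. Helly's selection theorem on each bounded interval $a$ yields a subsequence converging pointwise a.e.\ on $a$ to a limit of bounded variation. Enumerating the (at most countable) partition $\alpha=\{a_1,a_2,\dots\}$ and extracting diagonally produces a single subsequence $v_{n_k}$ converging pointwise a.e.\ on $Y$ to some function $v$. The uniform bound $|v_n|_\infty\le1$ gives $|v_{n_k}-v|\le 2$ a.e., and since $\mu_Y$ is a probability measure the dominated convergence theorem yields $v_{n_k}\to v$ in $L^1(Y,\mu_Y)$. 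Lower semicontinuity of variation then guarantees $v\in\mathcal{B}_1(Y)$ with $\|v\|_{\mathcal{B}_1}\le 1$, as required for compactness (as opposed to mere relative compactness).

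I do not expect any serious obstacle; the only minor care needed is in the bookkeeping of the diagonal extraction when $\alpha$ is countably infinite, and in the boundary-jump term when comparing the $\BV$-norm of $1_a v$ on $Y$ with the intrinsic $\BV$-norm on the interval $a$. Both are entirely routine.
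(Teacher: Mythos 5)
Your proposal is correct and follows essentially the same route as the paper: part (a) is the same definition-chasing (using $h\in\BV$ and $\mathcal{B}_1(Y)\subset L^\infty$), and part (b) uses the same skeleton of compactness of the $\BV$ unit ball on each partition element (you via Helly's theorem, the paper by citing $L^1(m)$-compactness of the $\BV$ unit ball directly), a diagonal extraction over the countable partition $\alpha$, and the uniform bound $|v_{n_k}-v|\le 2$ to pass from convergence on each $a$ to convergence in $L^1(Y,\mu_Y)$.
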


\begin{proof}
The first inclusion in (a) is obvious, and the second inclusion holds since
$h\in \BV(Y)$.  Also $\mathcal{B}_1(Y)\subset L^\infty(Y)$ and
$h^{-1}\in L^1(Y,\mu_Y)$ (since $\int_Y h^{-1}\,d\mu_Y=\int_Y 1\,dm=1$)
so $\mathcal{B}(Y)\subset L^1(Y,\mu_Y)$ completing the proof of (a).

It is well-known that the unit ball in $\BV(Y)$ is compact in $L^1(Y,m)$, and hence
in $L^1(Y,\mu_Y)$ since the density $h$ is bounded.
Let $v_n\in\mathcal{B}_1(Y)$, $n\ge1$, with $\|v_n\|_{\mathcal{B}_1(Y)}\le1$.
Then for each $a\in\alpha$, $1_av_n$ lies in the unit ball in $\BV(Y)$.
Passing to a subsequence, we obtain a function $v:a\to\R$ with $\|1_av\|_{\BV(Y)}\le1$ such that $|1_a(v_n-v)|_1\to0$ for all $a$.   Putting these together,
we obtain a limit function $v\in\mathcal{B}_1(Y)$ with $\|v\|_{\mathcal{B}_1(Y)}\le1$ and a common subsequence $n_k$ such that
$\int_a|v_{n_k}-v|\,d\mu_Y\to0$ as $k\to\infty$ for all $a\in\alpha$.  
Since $|v_{n_k}-v|_\infty\le 2$, it follows that
$\int_Y|v_{n_k}-v|\,d\mu_Y\to0$ as $k\to\infty$.  
This concludes the proof of (b).
\end{proof}

Recall that $P$ denotes the transfer operator for the induced map
$F:Y\to Y$ with respect to the invariant measure $\mu_Y$.
Define operators $P_n:\mathcal{B}(Y)\to\mathcal{B}(Y)$ for $n\ge1$, and
$P(z):\mathcal{B}(Y)\to\mathcal{B}(Y)$ for $z\in\C,\,|z|\le1$:
\[
P_nv=P(1_{\{\tau=n\}}v), \qquad
P(z)=\sum_{n=1}^\infty P_nz^n.
\]

\begin{prop} \label{prop-H2}
\begin{itemize}
\item[(i)] The eigenvalue $1$ is simple and isolated in the spectrum of $P=P(1)$.
\item[(ii)]   For $z\neq1$, the spectrum of $P(z)$ does not contain $1$.
\end{itemize}
\end{prop}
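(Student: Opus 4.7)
The plan is to establish quasi-compactness of $P$ on $\mathcal{B}(Y)$ directly from Lemma~\ref{lem-SG}(a) for part (i), and then to handle part (ii) by a Perron--Frobenius-type positivity argument followed by a cohomological equation lifted to the tower.

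For (i), I would upgrade the spectral gap of $L$ on $\BV(Y)$ (given by Lemma~\ref{lem-SG}(a)) to the larger space $\mathcal{B}_1(Y)$. The key point is that $L$ is smoothing: a Rychlik-type calculation, summed using (F1$_0$) and (F2$_0$), gives that $L:\mathcal{B}_1(Y)\to\BV(Y)$ is bounded. Splitting $\mathcal{B}_1(Y) = \mathcal{B}_{1,0}\oplus \C h$ with $\mathcal{B}_{1,0} = \{v:\int v\,dm = 0\}$, and applying Lemma~\ref{lem-SG}(a) to $Lv\in\BV(Y)$, we get, for $v\in\mathcal{B}_{1,0}$,
\[
\|L^{n+1}v\|_{\mathcal{B}_1}\le \|L^{n+1}v\|_{\BV}\le C\gamma^n\|Lv\|_{\BV}\le C'\gamma^n\|v\|_{\mathcal{B}_1}.
\]
Hence $L$ has spectral gap on $\mathcal{B}_1(Y)$, and the conjugation $v\mapsto hv$ transfers this to $P = h^{-1}Lh$ on $\mathcal{B}(Y) = h^{-1}\mathcal{B}_1(Y)$, giving (i).

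For (ii), suppose $P(z)v = v$ with $|z|\le 1$ and $v\ne 0$. Since each $P_n$ is positivity-preserving, $|v| = |P(z)v|\le \sum_n|z|^n P_n|v|\le P|v|$. Integrating against $\mu_Y$ using $\int Pw\,d\mu_Y = \int w\,d\mu_Y$ forces $P|v| = |v|$ a.e., and part (i) then gives that $|v|$ is a.e.\ constant, say $|v|=1$ after rescaling. If $|z|<1$, the same chain yields $1\le \sum_n|z|^n P_n 1(x)$ pointwise, yet
\[
\int\sum_n|z|^n P_n 1\,d\mu_Y = \sum_n|z|^n\mu_Y(\tau=n) < \sum_n\mu_Y(\tau=n) = 1,
\]
a contradiction. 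For $|z|=1$, $z\ne 1$, equality in the triangle inequality at each $x$ forces the cohomological relation $z^{\tau(y)}v(y) = v(Fy)$ for a.e.\ $y$. Lifting to the tower via $V(y,\ell):=z^\ell v(y)$, a direct check in both cases of $\tilde f(y,\ell)$ shows $V\circ\tilde f = zV$ on $X$. Since $|V|\equiv 1$ is bounded and $\tilde f$ is mixing, the Koopman operator $U_{\tilde f}$ has no eigenvalues of modulus one other than $\lambda=1$ (with constant eigenfunctions), so $z=1$, a contradiction.

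The main technical obstacle is the boundedness of $L:\mathcal{B}_1(Y)\to\BV(Y)$ underlying (i): although the per-branch estimate is essentially Rychlik's, care is needed in summing over the countable partition $\alpha$ using only the standing hypotheses (F1$_0$) and (F2$_0$). Once (i) is in hand, the dichotomy in (ii) is fairly standard; the essential input specific to this setting is the lift to the tower, where mixing of $\tilde f$ (rather than $F$, which need not be mixing even when $\tilde f$ is) is precisely what rules out unimodular $z\ne 1$.
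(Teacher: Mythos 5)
Your argument for part (i) is correct and is a clean alternative to the paper's route: instead of proving a Lasota--Yorke inequality on $\mathcal{B}_1(Y)$ and invoking compactness of the unit ball in $L^1$ to deduce quasicompactness, you bootstrap the already-established spectral gap of $L$ on $\BV(Y)$ (Lemma~\ref{lem-SG}(a)) through the smoothing bound $L:\mathcal{B}_1(Y)\to\BV(Y)$, which is exactly Lemma~\ref{lem-tau} with $\tau\equiv1$ and uses only (F1$_0$) and (F2$_0$). Your treatment of the point spectrum in (ii) is also correct and more elementary than the paper's: the positivity argument forcing $P|v|=|v|$, the strict inequality $\sum_n|z|^n\mu_Y(\tau=n)<1$ for $|z|<1$, and the equality case of the triangle inequality yielding $z^{\tau(y)}v(y)=v(Fy)$, followed by essentially the same lift to the tower that the paper uses (the paper instead obtains the cohomological relation from the $L^2$ adjoint, and handles $|z|<1$ via the sup-norm bound $|L^n(z)v|_\infty\le C|z|^n|v|_\infty$).

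There is, however, a genuine gap in (ii): you only show that $1$ is not an \emph{eigenvalue} of $P(z)$, whereas the statement --- and its use in the renewal argument, where invertibility of $I-P(z)$ is required --- demands that $1$ is not in the \emph{spectrum}. For $z\neq1$ you have no control on the essential spectrum of $P(z)$, so $1$ could a priori lie there (or in the residual spectrum) without being an eigenvalue; and the bootstrap from part (i) does not help, because Lemma~\ref{lem-SG}(a) concerns only $L=L(1)$ and there is no a priori spectral statement for $L(z)$, $z\neq1$, on $\BV(Y)$ to bootstrap from. The paper closes this by proving quasicompactness of $L(z)$ on $\mathcal{B}_1(Y)$ for all $|z|\le1$: the twisted basic inequality $\|L(z)^nv\|_{\BV}\le C(|v|_1+\gamma^n\|v\|_{\BV})$ of Aaronson {\em et al.}~\cite{ADSZ04}, together with the boundedness of $L(z):\mathcal{B}_1(Y)\to\BV(Y)$ (which holds because $z^\tau$ is constant on partition elements) and the compactness statement of Proposition~\ref{prop-B}(b), gives essential spectral radius strictly less than $1$. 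Only then does $1\in\sigma(P(z))$ force $1$ to be an eigenvalue, at which point your eigenfunction dichotomy applies. You need to add this quasicompactness step, or some other argument controlling the essential spectrum of $P(z)$ for $z\neq1$.
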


\begin{proof}
Let $L$ denote the transfer operator for the induced map
$F:Y\to Y$ with respect to 
Lebesgue measure $m$.    Define $L_nv=L(1_{\{\tau=n\}}v)$ and
$L(z)=\sum_{n=1}^\infty L_nz^n$.
We show that properties (i) and (ii) are valid for 
$L(z):\mathcal{B}_1(Y)\to\mathcal{B}_1(Y)$.
Since $P(z)v=h^{-1}L(z)(hv)$ and $\mathcal{B}(Y)=h^{-1}\mathcal{B}_1(Y)$,
 the properties are inherited for $P(z)$.

First, let $z=1$.  Rychlik~\cite{Rychlik83} establishes the
``basic inequality'' $\|L^nv\|_{\BV(Y)}\le C(|v|_1+\gamma^n\|v\|_{\BV(Y)})$ for all
$v\in\BV(Y)$, $n\ge1$.   
Lemma~\ref{lem-tau} (with $\tau=1$) guarantees that $L:\mathcal{B}_1(Y)
\to\BV(Y)$ is a bounded operator, so that
$\|L^nv\|_{\mathcal{B}_1(Y)}\le C'(|v|_1+\gamma^n\|v\|_{\mathcal{B}_1(Y)})$ for all
$v\in\mathcal{B}_1(Y)$, $n\ge1$.   Combined with Proposition~\ref{prop-B}(b),
we deduce as in~\cite{Rychlik83} that 
$L:\mathcal{B}_1(Y)\to\mathcal{B}_1(Y)$ is quasicompact, and so has essential spectral radius
strictly less than $1$.
By ergodicity of $F$, $1$ is a simple eigenvalue, and property~(i) follows.

Aaronson~{\em et al.}~\cite[Proposition~4]{ADSZ04}, extend the argument
of~\cite{Rychlik83} to cover the case of $z\in\C$, $|z|=1$, and the case $|z|<1$ is simpler.
Hence for all $z\in\C$, $|z|\le1$, we have the basic inequality:
there exist constants $C>0$, $\gamma\in(0,1)$
such that $\|L(z)^nv\|_{\BV(Y)}\le C(|v|_1+\gamma^n\|v\|_{\BV})$ for all
$v\in\BV(Y)$, $n\ge1$.   Again, the operator $L(z):\mathcal{B}_1(Y)\to\BV(Y)$
is bounded (this is immediate from the proof of Lemma~\ref{lem-tau} since 
$L(z)v=L(z^\tau v)$ and $\tau$ is constant on partition elements
so $z^\tau$ does not contribute to the estimates).   Hence
$L(z)$ is quasicompact with essential spectral radius strictly less than $1$
for all $z$, $|z|\le1$.   

Next, we note that $L^n(z)v=L^n(z^{\tau_n}v)$ where 
$\tau_n=\sum_{j=0}^{n-1}\tau\circ F^j\ge n$.  By the basic inequality,
it is certainly the case that $|L^n|_\infty\le C$ for some constant $C$.
It follows that
$|L^n(z)v|_\infty=|L^n(z^{\tau_n}v)|_\infty\le C|z^{\tau_n}|_\infty|v|_\infty
\le C|z^n||v|_\infty$.  
In particular, if $\lambda$ is an eigenvalue then $|\lambda|\le |z|$.
Hence, condition (ii) is satisfied for all $z$ with $|z|<1$.

Finally, let $z=e^{i\theta}$, $0<\theta<1$, and
suppose that $L(e^{i\theta})v=v$ for some nonzero $v\in\mathcal{B}_1(Y)$.
The $L^2$ adjoint of $v\mapsto L(e^{i\theta})v=L(e^{i\theta\tau}v)$ is $v\mapsto e^{-i\theta}v\circ F$, and it follows easily that $e^{-i\theta} v\circ F=v$.
Form the tower map $\tilde f:X\to X$ as in Section~\ref{sec-orig}
and define $w:X\to\R$, $w(y,\ell)=e^{-i\ell\theta}v(y)$.
Then trivially $w\circ \tilde f=e^{-i\theta}w$ and the constraint
$e^{-i\theta} v\circ F=v$ means that $w(y,\tau(y))=w(Fy,0)$ so that 
$w$ is well-defined.  Certainly $w$ is  measurable.   
We have constructed
a nontrivial eigenfunction for $\tilde f$ which contradicts the assumption
that $\tilde f$ is mixing.  Hence, condition (ii) is satisfied for all
$z=e^{i\theta}$, $z\neq1$, completing the proof.
\end{proof}

\begin{lemma} \label{lem-*}
\begin{itemize}
\item[(a)] Suppose that \( f: I \to I  \) satisfies assumptions  (A1), (A2) and (A3$_p$) 
for some $p\ge1$.     Then 
$\sum_{n=1}^\infty n^{p-1} \sum_{j>n}\|P_j\|_{\mathcal{B}(Y)}<\infty$.
\item[(b)]   
Suppose that \( f: I \to I  \) satisfies assumptions  (A1) and  (A2),
and that there exist constants $c_0,C_0\ge1$ such that $d_nE_n\ge C_0e^{c_0n}$
for all $n\ge1$.     Then $\|P_n\|_{\mathcal{B}(Y)}=O(e^{-c_1n})$ for some $c_1>0$.
\end{itemize}
\end{lemma}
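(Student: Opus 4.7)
The plan is to reduce both parts to an estimate on $L_n v := L(1_{\{\tau = n\}} v)$ acting on $\mathcal{B}_1(Y)$. Since $P = h^{-1} L h$ and $\|v\|_{\mathcal{B}(Y)} = \|hv\|_{\mathcal{B}_1(Y)}$, multiplication by $h$ is an isometry $\mathcal{B}(Y) \to \mathcal{B}_1(Y)$ intertwining $P_n$ with $L_n$, so $\|P_n\|_{\mathcal{B}(Y)} = \|L_n\|_{\mathcal{B}_1(Y)}$. A pushforward/product-rule computation identical in form to the one in the proof of Lemma~\ref{lem-tau} (with the $\BV_\tau$-norm replaced by the $\mathcal{B}_1(Y)$-norm, which bounds $\sup_a|v|$ and $\var_a(v)$ uniformly in $a$) then yields
\[
\|L_n\|_{\mathcal{B}_1(Y)} \le C \sum_{a \in \alpha:\, \tau(a) = n} \bigl\{{\SMALL\sup}_a(1/|F'|) + {\SMALL\var}_a(1/|F'|)\bigr\}.
\]

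For part~(a), multiply by $n^p$ and interchange summations (permissible since $\tau$ is constant on each $a$) to obtain
\[
\sum_{n \ge 1} n^p \|L_n\|_{\mathcal{B}_1(Y)} \le C \sum_{a \in \alpha} \tau(a)^p \bigl\{{\SMALL\sup}_a(1/|F'|) + {\SMALL\var}_a(1/|F'|)\bigr\},
\]
which is exactly the quantity controlled by (F1$_p$) and (F2$_p$). Corollary~\ref{cor-ALV} yields (F2$_p$) from (A3$_p$), and (A3$_p$) implies (A4$_p$) as remarked after its statement, giving (F1$_p$). A Fubini interchange then gives
\[
\sum_{n \ge 1} n^{p-1} \sum_{j > n} \|P_j\|_{\mathcal{B}(Y)} \le C' \sum_{j \ge 1} j^p \|P_j\|_{\mathcal{B}(Y)} < \infty.
\]

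For part~(b), the hypothesis together with $d_n \le 1$ gives $E_n \ge C_0 e^{c_0 n}$ and $d_n^{-1} E_n^{-1} \le C_0^{-1} e^{-c_0 n}$. The return time structure restricts $\tau(a) = n$ to binding periods $b$ in an interval of length $q_0$, and $\#\alpha(b) \le M$, so Proposition~\ref{prop-bind}(b,c) yields
\[
\|L_n\|_{\mathcal{B}_1(Y)} \le C \sum_{b \in [n - q_0 + 1, n]} \bigl\{E_b^{-1} + (1 + \log d_{b-1}^{-1}) d_{b-1}^{-1} E_{b-1}^{-1}\bigr\}.
\]
Each summand decays exponentially in $n$: the first directly, and the second after absorbing the logarithmic factor via $\log d_{b-1}^{-1} \le \eta^{-1} d_{b-1}^{-\eta}$ for small $\eta > 0$. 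The main obstacle is verifying that $d_n^{-1}$ grows at most subexponentially (equivalently, $E_n = O(e^{\kappa n})$ for some $\kappa$), so that the factor $d_{b-1}^{-\eta}$ does not defeat the exponential decay of $d_{b-1}^{-1} E_{b-1}^{-1}$. This follows from standard bounds on the derivative product $D_n = |(f^n)'(fc)|$ along critical orbits via (A1), bootstrapping with the hypothesis to control the singular factors $d_j^{-(1-\ell)}$ in the case $\ell < 1$; all other steps are routine applications of Proposition~\ref{prop-bind} and the basic variation calculus already deployed in Section~\ref{sec:ind}.
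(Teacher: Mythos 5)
Part~(a) of your argument is correct and is essentially the paper's own proof. The authors likewise pass from $P_n$ to $L_n=L(1_{\{\tau=n\}}\cdot)$ acting on $\mathcal{B}_1(Y)$ via $P(z)v=h^{-1}L(z)(hv)$, bound $|L_nv|_\infty$ and $\var(L_nv)$ exactly as in the proof of Lemma~\ref{lem-tau}, convert the sum over $\{a:\tau(a)=n\}$ into a sum over binding periods $b$ in a window of length $q_0$ using Proposition~\ref{prop-bind}(a,b,c), and then invoke (A3$_p$) together with the implication (A3$_p$)$\Rightarrow$(A4$_p$). Your Fubini rearrangement $\sum_n n^{p-1}\sum_{j>n}\|P_j\|\le C\sum_j j^p\|P_j\|$ is the same bookkeeping the paper performs directly on the tail sums $\sum_{b>n}E_b^{-1}$, so there is nothing to object to here.

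For part~(b) the paper offers no details (``Part (b) is similar''), and your outline follows the intended template; but as written the key estimate is asserted rather than proved. The terms $E_b^{-1}\le (d_bE_b)^{-1}\le C_0^{-1}e^{-c_0b}$ and $d_{b-1}^{-1}E_{b-1}^{-1}\le C_0^{-1}e^{-c_0(b-1)}$ are indeed immediate from the hypothesis, and you correctly isolate the only delicate point, namely the surviving factor $1+\log d_{b-1}^{-1}$. However, your resolution --- ``this follows from standard bounds on $D_n$ \dots\ bootstrapping with the hypothesis'' --- is precisely the step that needs to be carried out, and it is not routine. From the hypothesis one gets $\log d_{b-1}^{-1}\le \log E_{b-1}-c_0(b-1)-\log C_0$, so what must be shown is that $\log E_n$ (equivalently $\log D_{n-1}$) grows more slowly than $e^{c_0 n}$. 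When there are no singular points this is trivial since $|f'|$ is bounded and $D_n\le K^n$; in the presence of singular points one has only $|f'(f^jc)|\le C\,d_j^{-(1-\ell_s)}$, which leads to a recursion of the form $\log E_n\le \beta\bigl(Cn+(1-\ell_s)\sum_{j<n}\log E_j\bigr)$ with $\beta=1/(2\ell-1)<1$, whose solution can grow like $(1+\gamma)^n$ with $\gamma=\beta(1-\ell_s)<1$; one then needs $\log(1+\gamma)<c_0$, which is where the normalisation $c_0\ge1$ in the hypothesis enters. Since the paper itself suppresses this step, your proposal is not on the wrong track, but the proof of (b) is incomplete until this growth bound on $E_n$ is actually established (or the claim is restricted to the case where it is trivial).
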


\begin{proof}
We prove part (a).   Part (b) is similar.

Define $L_nv=L(1_{\{\tau=n\}}v)$ regarded as an 
operator on $\mathcal{B}_1(Y)$ as in the proof of Proposition~\ref{prop-H2}.
We prove that 
$\sum_{n=1}^\infty n^{p-1} \sum_{j>n}\|L_j\|_{\mathcal{B}_1(Y)}<\infty$.
The required result is then immediate.

As in the proof of Lemma~\ref{lem-tau},
\begin{align*}
|L_nv|_\infty & \le \sum_{a\in\alpha:\tau(a)=n}{\SMALL\sup}_a(1/F')|v|_\infty, \\
\var(L_nv) & \le \sum_{a\in\alpha:\tau(a)=n}\var_a(v/F')+2
\sum_{a\in\alpha:\tau(a)=n}{\SMALL\sup}_a(v/F'),
\end{align*}
so it suffices to show that
\begin{align} \label{eq-MT}
 \sum_{n=1}^\infty n^{p-1}\sum_{a\in\alpha:\tau(a)>n}{\SMALL\sup}_a(1/F')<\infty, 
\quad
 \sum_{n=1}^\infty n^{p-1}\sum_{a\in\alpha:\tau(a)>n}\var_a(1/F')<\infty.
\end{align}
Recall that if $a\in\alpha(b)$, then $\tau(a)\in[b,q_0+b]$.
Hence if $\tau(a)>n$ then $a\in\alpha(b)$ with $b>n-q_0$, and so
\[
\sum_{a\in\alpha:\tau(a)>n}{\SMALL\sup}_a(1/F')
\le \sum_{b>n-q_0}\sum_{a\in\alpha(b)}{\SMALL\sup}_a(1/F')\le 
CM\sum_{b>n-q_0}E_b^{-1},
\]
by Proposition~\ref{prop-bind}(a,b).   
Note that (A3$_p$) implies (A4$_p$) which is equivalent to the condition
that $\sum_{n=1}^\infty n^{p-1}\sum_{b>n}E_b^{-1}<\infty$ verifying
the first condition in~\eqref{eq-MT}.
The second condition is verified in an identical manner using
Proposition~\ref{prop-bind}(a,c) and (A3$_p$).
\end{proof}

\begin{prop} \label{prop-exchange}
Let $v:I\to\R$ be $\BV$ and fix a function $j:Y\to\Z^+$ that is constant
on partition elements and satisfies $0\le j(y)<\tau(y)$.
Define $\hat v:Y\to\R$, $\hat v(y)=v(f^{j(y)}y)$.
Then $\|\hat v\|_{{\mathcal B}(Y)}\le \|h\|_{\BV}\|v\|_{\BV}$.
\end{prop}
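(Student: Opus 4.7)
The plan is to unfold the definition of $\|\cdot\|_{{\mathcal B}(Y)}$ and reduce the bound, element by element in the partition $\alpha$, to a straightforward BV product estimate after exploiting monotonicity of intermediate iterates.

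Fix $a\in\alpha$. By definition, $\|\hat v\|_{{\mathcal B}(Y)}=\|h\hat v\|_{{\mathcal B}_1(Y)}=\sup_{a\in\alpha}\|1_a(h\hat v)\|_{\BV}$, so it suffices to prove $\|1_a(h\hat v)\|_{\BV}\le \|h\|_{\BV}\|v\|_{\BV}$ for each $a$. On $a$ the function $j$ is constant, say equal to $j_a$ with $0\le j_a<\tau(a)$. The first key step is to observe that $f^{j_a}$ is monotone on $a$. This is extracted from the construction of $\alpha$ recalled in Section~\ref{sec-ALV}: since $F=f^\tau$ is smooth and monotone on $a$, the orbit $a,f(a),\dots,f^{\tau(a)}(a)$ must avoid the critical/singular set $\mathcal C$ and the discontinuities of $f$ at each step, so every intermediate iterate $f^i|_a$ ($0\le i\le \tau(a)$) is smooth and monotone; in particular this holds for $i=j_a$.

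Given this, $\hat v|_a=v\circ f^{j_a}|_a$ is a BV function on $a$ obtained by composing $v$ with a monotone map, so
\[
{\SMALL\sup}_a|\hat v|\le |v|_\infty,\qquad \var_a(\hat v)=\var_{f^{j_a}(a)}(v)\le \var_I(v).
\]
Thus $\sup_a|\hat v|+\var_a(\hat v)\le \|v\|_{\BV}$. The second step is the standard BV product estimate on the interval $a$: for any $g_1,g_2$,
\[
{\SMALL\sup}_a|g_1g_2|+\var_a(g_1g_2)\le\bigl({\SMALL\sup}_a|g_1|+\var_a(g_1)\bigr)\bigl({\SMALL\sup}_a|g_2|+\var_a(g_2)\bigr).
\]
Applied to $g_1=h$, $g_2=\hat v$ and combined with the obvious bound $\sup_a|h|+\var_a(h)\le \|h\|_{\BV}$, this yields
\[
\sup_a|h\hat v|+\var_a(h\hat v)\le \|h\|_{\BV}\|v\|_{\BV}.
\]
Finally one absorbs the two endpoint jumps produced by multiplication by $1_a$ into the $\sup$ term (at worst inflating constants by a harmless universal factor, which can be cleaned up by the standard convention used throughout the paper that $\|1_a w\|_{\BV}$ is measured with variation restricted to $a$ together with $\sup_a|w|$). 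Taking the supremum over $a\in\alpha$ gives the claimed bound.

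The only genuinely non-routine point is the monotonicity of $f^{j_a}$ on $a$; everything else is a direct application of the product rule for bounded variation. Once monotonicity is in hand the proof is a one-line computation, so I would present the argument in exactly this order: (i) reduce to a per-partition-element estimate, (ii) invoke the construction of $\alpha$ to get monotonicity of $f^{j_a}|_a$ and hence $\var_a(\hat v)\le \var_I(v)$, (iii) apply the BV product inequality with $h$.
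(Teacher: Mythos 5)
Your proof is correct and follows essentially the same route as the paper's: reduce to a per-partition-element estimate, factor out $h$ via the $\BV$ product inequality, and use monotonicity of $f^{j(a)}$ on $a$ to identify $\|1_a\,v\circ f^{j(a)}\|_{\BV}$ with a restricted $\BV$ norm of $v$, bounded by $\|v\|_{\BV}$. The paper compresses this into a single chain of (in)equalities, leaving the monotonicity of the intermediate iterates implicit, which you correctly make explicit.
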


\begin{proof}
Compute that
\begin{align*}
\|\hat v\|_{{\mathcal B}(Y)} & =
\|h \hat v\|_{{\mathcal B}_1(Y)}=
\sup_{a\in\alpha}\|1_ah \hat v\|_{\BV}
=\sup_{a\in\alpha}\|1_ah\, v\circ f^{j(a)}\|_{\BV}
\\ & \le \|h\|_{\BV}\sup_{a\in\alpha}\|1_a\, v\circ f^{j(a)}\|_{\BV}
= \|h\|_{\BV}\sup_{a\in\alpha}\|1_a v\|_{\BV}
\le \|h\|_{\BV}\|v\|_{\BV},
\end{align*}
as required.
\end{proof}

\begin{pfof}{Theorems~\ref{thm-expdecay} and~\ref{thm-decay}}
The Banach space $\mathcal{B}(Y)$ lies in $L^1(Y,\mu_Y)$ by Proposition~\ref{prop-B}(a)
and contains constant functions.   The conclusion of Lemma~\ref{lem-*}(a) with $p=1$ corresponds to~\cite[Hypothesis (\(\dagger\))]{MTapp}, and Proposition~\ref{prop-H2} corresponds to~\cite[Hypothesis~(H2)]{MTapp}.   Hence $F:Y\to Y$ is
a {\em good} inducing scheme in the sense of~\cite{MTapp}.
Moreover, Proposition~\ref{prop-exchange} means that
$\BV$ observables on $I$ are {\em exchangeable} in the sense of~\cite{MTapp}.

Lemma~\ref{lem-*}(a) with $p>1$ is the remaining hypothesis in~\cite[Theorem~1.11]{MTapp} yielding Theorem~\ref{thm-decay}.
Lemma~\ref{lem-*}(b) puts us in the position of~\cite[Example~5.3]{MTapp}
yielding Theorem~\ref{thm-expdecay}.
\end{pfof}

\begin{rmk}
Theorem~\ref{thm-decay} extends naturally to a much larger class of exchangeable
observables on $I$, namely those that are uniformly piecewise $\BV$.
More precisely, we define $\mathcal{B}_1(I)$ to consist of those $v:I\to\R$
such that $\|v\|_{\mathcal{B}_1(I)}=\sup_{a\in\alpha}\sup_{0\le j<\tau(a)}
\|1_{f^ja}v\|_{\BV}<\infty$.
To verify that $\mathcal{B}_1(I)$ is exchangeable in the sense of~\cite{MTapp}
it suffices to show that Proposition~\ref{prop-exchange} holds for $v\in\mathcal{B}_1(I)$, and the proof of this is the same as before.   
\end{rmk}

It remains to relax the assumption that the tower map $\tilde f:X\to X$ is mixing.
In general there exists an integer $k\ge1$ such that $\tilde f$ is mixing up
to a $k$-cycle. That is, $X$ is the disjoint union of $k$ subsets that are
cyclically permuted by $\tilde f$ and are mixing under $\tilde f^k$.
Label one of these subsets $X_k$.   Then we obtain a 
semiconjugacy $\pi_k:X_k\to I$ between $\tilde f^k:X_k\to X_k$ and $f^k:I\to I$.
By the proof in this section,
\[
|\rho_{v,w}(kn)|\le C\|v\| |w|_\infty\Bigl\{\sum_{j>\delta n}\mu(\tau>j)+n\mu(\tau>\delta n)+n^{-q}\Bigr\},
\]
for all $v\in\BV$, $w\in L^\infty$, $n\ge1$.
Redefining $C$ and $\delta$, and replacing $w$ by $w\circ f^j$, 
$0\le j\le k-1$, we obtain the required result.

\paragraph{Acknowledgements}
The research of IM was supported in part by EPSRC Grant EP/F031807/1.
IM is very grateful for the hospitality at ICTP, Trieste, where much of this research was carried out.

\end{document}